%This manuscript was updated at 03/28/2009
\documentclass[reqno, 11pt]{amsart}
\usepackage{amssymb}
\usepackage{graphicx}
\usepackage{stmaryrd}
\usepackage[hypertex]{hyperref}

\usepackage{verbatim}
\numberwithin{equation}{section}

\newcommand{\mysection}[1]{\section{#1}
\setcounter{equation}{0}}

\newtheorem{theorem}{Theorem}[section]
\newtheorem{corollary}[theorem]{Corollary}
\newtheorem{lemma}[theorem]{Lemma}
\newtheorem{proposition}[theorem]{Proposition}

\theoremstyle{definition}
\newtheorem{remark}[theorem]{Remark}

\theoremstyle{definition}

\theoremstyle{definition}
\newtheorem{assumption}[theorem]{Assumption}

\makeatletter
\def\dashint{\operatorname%
{\,\,\text{\bf--}\kern-.98em\DOTSI\intop\ilimits@\!\!}}
\makeatother

\newcommand{\vertiii}[1]{{\vert\kern-0.25ex\vert\kern-0.25ex\vert #1  \vert\kern-0.25ex\vert\kern-0.25ex\vert}}

\def\bR{\mathbb{R}}

\def\bH{\mathbb{H}}
\def\bQ{\mathbb{Q}}

\def\bL{\mathbb{L}}
\def\bM{\mathbb{M}}

\def\cD{\mathcal{D}}

\def\cH{\mathcal{H}}

\def\cM{\mathcal{M}}

\begin{document}

\title[Neumann Problem in weighted Sobolev spaces]
{Neumann problem for non-divergence elliptic and parabolic equations with BMO$_x$ coefficients in weighted Sobolev spaces}

\author[H. Dong]{Hongjie Dong}
\address[H. Dong]{Division of Applied Mathematics, Brown University,
182 George Street, Providence, RI 02912, USA}
\email{Hongjie\_Dong@brown.edu}
\thanks{H. Dong was partially supported by the NSF under agreement DMS-1056737.}

\author[D. Kim]{Doyoon Kim}
\address[D. Kim]{Department of Applied Mathematics, Kyung Hee University, 1732 Deogyeong-daero, Giheung-gu, Yongin-si, Gyeonggi-do 446-701, Republic of Korea}

\email{doyoonkim@khu.ac.kr}

\thanks{}

\author[H. Zhang]{Hong Zhang}
\address[H. Zhang]{Division of Applied Mathematics, Brown University,
182 George Street, Providence, RI 02912, USA}
\email{Hong\_Zhang@brown.edu}
\thanks{H. Zhang was partially supported by the NSF under agreement DMS-1056737.}

\subjclass[2010]{35J25, 35K20, 35R05}

\keywords{$L_p$ estimates, weighted Sobolev spaces, parabolic equations}

\begin{abstract}
We prove the unique solvability in weighted Sobolev spaces of non-divergence form elliptic and parabolic equations on a half space with the homogeneous Neumann boundary condition. All the leading coefficients are assumed to be only measurable in the time variable and have small  mean oscillations in the spatial variables.
Our results can be applied to Neumann boundary value problems for {\em stochastic} partial differential equations with BMO$_x$ coefficients.
\end{abstract}

\maketitle

\mysection{Introduction}
In this paper, we study $L_p$ estimates for elliptic and parabolic equations in non-divergence form:
$$
a_{ij}D_{ij} u+b_iD_i u+cu-\lambda u=f
\quad \text{in}\,\,\, \bR^d_+,
$$
\begin{equation*}
-u_t+a_{ij}D_{ij} u+b_iD_i u+cu-\lambda u=f\quad \text{in}\,\,\, (-\infty,T) \times\bR^d_+,
\end{equation*}
with the homogeneous Neumann boundary condition, where $\lambda$ is a nonnegative constant and $\bR^d_+$ is a half space defined by
$$
\bR^d_+=\{x = (x_1, \cdots, x_d) = (x_1,x') : x_1>0\}.
$$
We consider the equations in weighted Sobolev spaces with measures
$$
\mu_d(dx) = x_1^{\theta-d}\, dx\quad \text{and} \quad \mu(dx\,dt)=x_1^{\theta-d}\,dx\,dt
$$ in the elliptic and parabolic cases, respectively, for some  $\theta\in (d-1,d-1+p)$.

Krylov \cite{Kry99} first studied Laplace's equation and the heat equation in weighted Sobolev spaces $H_{p,\theta}^\gamma$ and $\bH_{p,\theta}^\gamma$; see Section 2 for precise definitions.
After \cite{Kry99}, there has been quite a few work on the solvability theory for elliptic and parabolic equations in weighted Sobolev spaces, for instance, see \cite{KimKry04, KozNaz09, KimLee13, KimKimLee14}. In particular, the authors of \cite{KimLee13, KimKimLee14} studied second-order parabolic equations with the Dirichlet boundary condition in weighted Sobolev spaces with leading coefficients having small mean oscillations. The motivation of such theory came from stochastic partial differential equations (SPDEs) and is well explained in \cite{Kry94}.

Recently, Dong and Kim \cite{DK14} studied both divergence and non-divergence type elliptic and parabolic equations on a half space in weighted Sobolev spaces with the Dirichlet boundary condition. The coefficients in \cite{DK14} are contained in a larger class than those in \cite{KimLee13, KimKimLee14}. Namely, the leading coefficients are assumed to be only measurable in $t$ and $x_1$ except $a_{11}$, which is measurable in either $t$ or $x_1$, where $x_1$ is the normal direction. Kozlov and Nazarov \cite{KozNaz14} considered an oblique derivative problem for non-divergence type parabolic equations on a half space with coefficients discontinuous in $t$ (and continuous in $x$) in a weighted Sobolev space.  Their proof is based on a careful investigation of Green's functions. In this paper, we extend the result in \cite{KozNaz14} to a more general setting. Namely, the coefficients considered in this paper are measurable in the time variable and have small mean oscillations with respect to a weighted measure in the spatial variables. We call this class of coefficients BMO$_x$. The weight, for instance, for the parabolic case  is $\mu(dx\,dt)=x_1^{\theta-d}\,dx\,dt$, where $\theta\in(d-1,d-1+p)$. The condition $\theta\in (d-1,d-1+p)$ is sharp even for the heat equation;  see \cite{Kry99}.
We note that the coefficients $a_{ij}$ in \cite{DK14} also have small mean oscillations with respect to a weighted measure as functions of $x' \in \bR^{d-1}$ (whereas, in this paper as functions of $x \in \bR^d$), but the size of the modulus of regularity of $a^{ij}$ is proportional to the distance to the boundary.
See Assumption \ref{assumption} and Remark \ref{rem0717_1}.

Since the counterexamples of Ural'ceva \cite{Ura67} and Nadirashvili \cite{Nad97}, particular types of discontinuous coefficients have been considered for the solvability of equations. One type of discontinuous coefficients, which has been widely considered, is the class of vanishing mean oscillation (VMO) coefficients. The study of equations with VMO coefficients was initiated by Chiarenza, Frasca, and Longo \cite{CFL91, CFL93}. In  \cite{Kry97} Krylov gave a unified approach to investigating parabolic and elliptic equations in unweighted Sobolev spaces with coefficients that are measurable in the time variable and  have small mean oscillations with respect to the usual Lebesgue measure in the spatial variables (BMO$_x$ with respect to the Lebesgue measure); see also \cite{kry07}.
In fact, the coefficients in \cite{Kry97} are called VMO$_x$ coefficients, but their mean oscillations in $x$ do not have to vanish as the radii of cylinders go to zero.
For more related work about $L_p$ theory with BMO$_x$ or partially BMO$_x$ coefficients for parabolic systems and higher-order parabolic systems, we refer the reader to \cite{DK11, DK11ARMA, Dong10, HZ14} and the references therein.

Our proof is in the spirit of the approach by Krylov \cite{Kry97}.  The key point of such approach is to establish mean oscillation type estimates for equations with simple coefficients, i.e.,  coefficients are only measurable as functions of $t$. Then we apply a perturbation argument, which is well suited to the mean oscillation estimates, to deal with BMO$_x$ coefficients. Finally we obtain the  desired $L_p$ estimates by applying the Fefferman--Stein theorem on sharp functions and the Hardy--Littlewood maximal function theorem in weighted $L_p$ spaces.

Here one of the main steps is to get the mean oscillation estimates of $D^2u$. For a simple equation
\begin{equation*}
-u_t+a_{ij}(t) D_{ij}u=f
\end{equation*}
in $\bR \times \bR^d_+$ with the Neumann boundary condition $D_1u=0$ on $\{x_1=0\}$, we treat $DD_1u$ and $D^2_{x^\prime}u$ separately.  We estimate $DD_1u$ as follows. Differentiating the equation above with respect to $x_1$, it is easily seen that $D_1u$ satisfies the divergence type parabolic equation
\begin{equation*}
-(D_1u)_t+D_i(a_{ij}D_j(D_1u))=D_1 f
\end{equation*}
in $\bR \times \bR^d_+$ with $D_1 u=0$ on $\{x_1=0\}$.
Therefore, we can apply a result in \cite{DK14} to obtain the mean oscillation estimates of $DD_1u$. On the other hand, the estimates of $D^2_{x^\prime}u$ are much involved. We treat the mean oscillations of $D_{x^\prime}^2u$ in the $x_1$ variable and $x^\prime$ variables differently. By integrating by parts and the Poincar\'e inequality in weighted spaces, we manage to bound the mean oscillations of $D_{x^\prime}^2u$ in the $x_1$ variable by the maximal functions of $DD_1u$.
For the mean oscillations in $x^\prime$ variables, we write the equation in the following form
\begin{equation*}
-u_t+\sum_{i,j\ge 2}a_{ij}D_{ij}u=f-\sum_{j=2}^d(a_{j1}+a_{1j})D_{1j}u-a_{11}D_{11}u,
\end{equation*}
which can be regarded as a non-divergence type parabolic equation in $\bR\times \bR^{d-1}$.
Then by applying an interior estimate result without weights for $D^2 u$, where $u$ is, as a function of $x' \in \bR^{d-1}$, a solution of a non-divergence type equation in the whole space $\bR^{d-1}$ (see, for instance, \cite{Kry97}), we bound the mean oscillations of $D_{x^\prime}^2u$ in the $x^\prime$ variables by the maximal functions of $f$, $DD_1 u$ and $D_{x^\prime}^2 u$.

As an application of our results, in a forthcoming paper we are going to study non-divergence form SPDEs in weighted or unweighted Sobolev spaces with the Neumann boundary condition. A particular case is the solvability of SPDEs in the form
\begin{equation*}
du=(a_{ij}D_{ij} u+b_iD_i u+cu-\lambda u+f)\,dt+g_k\, dw^k_t\quad \text{in}\,\,\, (-\infty,T) \times\bR^d_+,
\end{equation*}
where $w^k_t$ are independent one-dimensional Wiener processes, $a_{ij}$, $b_i$, and $c$ satisfy the same conditions as in the current paper, and $f$, $Dg_k$,  $g_k\in \bL_{p,\theta}(-\infty, T)$; see the definition of the $\bL_{p,\theta}$ space at the beginning of Section 2. We note that SPDEs in weighted Sobolev spaces with the {\em Dirichlet boundary condition} have been studied extensively in the past fifteen years. We refer the reader to \cite{KrLo99a, KrLo99b, Kry09, MR3174219} and the references therein.

This paper is organized as follows. In the next section, we introduce some notation and state our main results. In Section 3, we obtain the mean oscillation estimates for $D_{x^\prime}^2u$ and $DD_1u$ separately for a parabolic equation with simple coefficients. In Section 4, we prove our main theorem (Theorem \ref{main}).

\section{preliminaries and main results}
Throughout the paper we use, for example, the following Einstein summation convention: $a_{ij}D_{ij}u=\sum_{i,j}a_{ij}D_{ij}u$.
We introduce some notation used in the paper. As hinted in the introduction, a point in $\mathbb{R}^d$ is denoted by $x=(x_1,\cdots,x_d)$, and also by $x=(x_1,x^\prime)$, where $x^\prime \in \mathbb{R}^{d-1}$. Recall $\bR^{d}_+=\{x:x_1>0\}$. In the parabolic case a point in $\mathbb{R}^{d+1}=\mathbb{R}\times\mathbb{R}^d$
is denoted by $X=(t,x)$.
Set $\bR^{d+1}_+=\{(t,x): x_1> 0\}$.
For $r>0$, let $B^\prime_r(x^\prime)$ be the open ball in $\bR^{d-1}$ of radius $r$ with center $x^\prime$. Denote
\begin{align*}
&B_r(x)=B_r(x_1,x^\prime)=(x_1-r,x_1+r)\times B^\prime_r(x^\prime),\\
& Q_r(t,x)=(t-r^{2},t)\times B_r(x),\\
&B_r^+(x)=B_r(x)\cap \bR_+^{d},\quad Q_r^+(t,x)=Q_r(t,x)\cap \bR^{d+1}_+.
\end{align*}
For $a \in \bR$, we use $Q_r(a)$ to denote $$Q_r(0,a,0)=(-r^2,0)\times(a-r,a+r)\times B_r^\prime(0),$$
and $Q_r=Q_r(0)$. Similarly, we define $Q_r^+(a)$ and $Q_r^+$.
By $a^+$ we mean $\max \{ a, 0 \}$.

Throughout the paper, we assume that the leading coefficients $a_{ij}$ are bounded,  measurable, and satisfy the ellipticity condition:
\begin{equation*}
a_{ij}\eta_i\eta_j\ge \delta|\eta|^2,\quad |a_{ij}|\le 1/\delta
\end{equation*}
for any $\eta\in \bR^d$, where $\delta>0$ is a constant.

To introduce the function spaces used in this paper, we first recall
the weighted Sobolev spaces $H_{p,\theta}^\gamma$ introduced in \cite{Kry99}. If $\gamma$ is a non-negative integer
\begin{equation*}%\label{eq 4.172}
H_{p,\theta}^\gamma=H_{p,\theta}^\gamma(\bR^{d}_+)=\{u: x_1^{|\alpha|}D^\alpha u\in L_{p,\theta}(\bR^{d}_+) \quad \forall\, \alpha: 0\le |\alpha|\le \gamma\},
\end{equation*}
where $L_{p,\theta}(\bR^d_+) (= L_{p,\theta})$ is a Lebesgue space with the measure $\mu_d(dx) =x_1^{\theta-d}\,dx$.
For a general real number $\gamma$, $H_{p,\theta}^\gamma$ is defined as follows. Take and fix a nonnegative function $\zeta\in C_0^\infty(0,\infty)$ such that
\begin{equation*}
\sum_{n=-\infty}^\infty\zeta^p(e^{x_1-n})\ge1
\end{equation*}
for all $x_1\in \bR$. For any $\gamma, \theta\in \bR$ and $p\in (1,\infty)$, let $H_{p,\theta}^\gamma$ be the set of all functions $u$ on $\bR^d_+$ such that
$$\|u\|_{\gamma,p,\theta}^p=\sum_{n=-\infty}^\infty e^{n\theta}\|u(e^n\cdot)\zeta(x_1)\|^p_{\gamma,p}<\infty,$$
where $\|\cdot\|_{\gamma,p}$ is the norm in the Bessel potential space $H_{p}^\gamma(\bR^d)$.
For any $a\in \bR$, let $M^\alpha$ be the operator of multiplication by $(x_1)^\alpha$ and $M:=M^1$. We write $u\in M^\alpha H_{p.\theta}^\gamma$ if $M^{-\alpha}u\in H^\gamma_{p,\theta}$.
We set
$$\bH_{p,\theta}^\gamma(S,T)=L_p((S,T), H_{p,\theta}^\gamma),\quad \bL_{p,\theta}(S,T)=L_p((S,T), L_{p,\theta}),$$
where $-\infty\le S<T\le\infty$.

Our solution spaces are defined as follows. For the elliptic case, we set
$$
W_{p,\theta}^2(\bR^d_+) = \{ u : u, Du, D^2u \in L_{p,\theta}(\bR^d_+)\}.
$$
For the parabolic case,
\begin{align*}%\label{space}
W_{p,\theta}^{1,2}(S, T)
=\{u: u, Du, D^2u, u_t  \in \bL_{p, \theta}(S,T)\},
\end{align*}
where $-\infty \le S < T \le \infty$.

We also use the following H\"older spaces. For a function $f$ on $\cD\subset \bR^{d+1}$, define
\begin{equation*}
[f]_{a,b,\cD}=:\sup\Big\{\frac{|f(t,x)-f(s,y)|}{|t-s|^{a}+|x-y|^b}: (t,x), (s,y)\in \cD, (t,x)\neq(s,y)\Big\},
\end{equation*}
where $a,b\in (0,1]$. For $a\in (0,1]$, we set
$$\|f\|_{\frac{a}{2},a, \cD}=\|f\|_{\infty,D}+[f]_{\frac{a}{2},a.\cD}.$$
The space corresponding to $\|\cdot\|_{a/2,a,\cD}$ is denoted by $C^{a/2,a}(\cD)$.

Throughout the paper, we use the weighted measures:
\begin{equation*}
\mu_d(dx):=(x_1)^{\theta-d}\,dx,\quad\mu(dx\,dt):=(x_1)^{\theta-d}\,dx\,dt,
\end{equation*}
where $\theta\in(d-1,d-1+p)$ and $p\in (1,\infty)$.

Now we state our regularity assumption on the leading coefficients. For a function $g$ on $\bR^{d+1}_+$, denote
\begin{equation*}
[g(t,\cdot)]_{B^+_r(x)}=\dashint_{B^+_{r}(x)}|g(t,y)-\dashint_{B^+_r(x)}g(t,z)\,\mu_d(dz)|\,\mu_d(dy).
\end{equation*}
Then we define the mean oscillation of $g$ in $Q^+_r(s,y)$ with respect to $x$ as
\begin{equation*}
\text{osc}_x(g, Q^+_r(s,y))=\dashint_{s-r^2}^{\,\,s}[g(\tau, \cdot)]_{B^+_r(y)}d\tau,
\end{equation*}
and denote
\begin{equation}
							\label{eq0718_1}
g_R^\#=\sup_{(s,y)\in \bR^{d+1}_+}\sup_{r\le R}\,\text{osc}_x(g, Q^+_r(s,y)).
\end{equation}
%\begin{equation*}
%osc_x^\theta(a, Q_r(t,x_1,x^\prime))=r^{-2}\int_{t-r^2}^t\big(\dashint_{B_r(x)}\dashint_{B_r(x)}|a(s,y)-a(s,z)|\nu(dy)\nu(dz)\big)ds,
%\end{equation*}
%which is a mean oscillation with respect to measure $\nu(dx)=(x_1)^{\theta-d}dx$.
Using the notation above with $a_{ij}$ in place of $g$, we state the following regularity assumption on $a_{ij}$ with a sufficiently small parameter $\rho>0$ to be specified later.

\begin{assumption}[$\rho$]
                            \label{assumption}
There exists a positive constant $R_0$ such that
$$A_{R_0}^\#:=\sup_{i,j} \, (a_{ij})_{R_0}^\#\le \rho.$$
\end{assumption}

Note that under this assumption, the coefficients $a^{ij}$ may not have any regularity with respect to $t$.

\begin{remark}
							\label{rem0717_1}
While we have a fixed size of the modulus of regularity $R_0$ above, in \cite{DK14} the size of the modulus is proportional to the distance from the boundary to the location where the mean oscillations of $a_{ij}$ are measured. To express this, one can replace $R$ in \eqref{eq0718_1} by $y_1 R$.
This means, in particular, that the coefficients $a_{ij}$ in \cite{DK14} are allowed to be much rougher {\em near the boundary} than those in this paper.
\end{remark}

For lower-order terms, we assume that the coefficients $b_i$ and $c$ are only measurable (without any regularity assumptions) and bounded so that $$|b_i|,\, |c|\le K$$ for some constant $K>0$.

The following theorems are our main results, the first of which is the unique solvability of parabolic equations.

\begin{theorem}\label{main}
Let $T\in (-\infty,\infty]$, $1<p<\infty$, and $\theta \in (d-1,d-1+p)$ be constants. Then there exist constants $\rho=\rho(d,\delta,p,\theta)>0$ and $\lambda_0=\lambda_0(d,\delta,p,\theta,K,R_0)\ge 0$ such that under Assumption \ref{assumption} ($\rho$)
the following assertions hold.

(i) Suppose that $u\in W_{p,\theta}^{1,2}(-\infty,T)$ satisfies
\begin{equation}
-u_t+a_{ij}D_{ij}u+b_iD_iu+cu-\lambda u=f\label{eq 4.081}
\end{equation}
in $(-\infty,T)\times \bR_+^d$ with the Neumann boundary condition $D_1u=0$ on $\{x_1=0\}$, where $f\in \bL_{p,\theta}(-\infty, T)$. Then we have
\begin{equation}
\|u_t\|_{p,\theta}+\|M^{-1}D_1u\|_{p,\theta}+\|D^2 u\|_{p,\theta}+\sqrt{\lambda}\|Du\|_{p,\theta}+\lambda\|u\|_{p,\theta}\le C_0\|f\|_{p,\theta}\label{main estimate}
\end{equation}
provided that $\lambda\ge \lambda_0$,
where $\|\cdot\|_{p,\theta}=\|\cdot\|_{\bL_{p,\theta}(-\infty,T)}$ and $C_0=C_0(d, \delta, p,\theta)$. In particular, when $b_i=c=0$ and $a_{ij}=a_{ij}(t)$, we can take $\lambda_0=0$.

(ii) For any $\lambda>\lambda_0$ and $f\in \bL_{p,\theta}(-\infty,T)$, there is a unique solution $u\in W_{p,\theta}^{1,2}(-\infty,T)$ to the equation \eqref{eq 4.081}.
\end{theorem}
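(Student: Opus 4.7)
The plan is to follow Krylov's mean oscillation approach \cite{Kry97}. The argument has three stages: (i) establish pointwise mean oscillation estimates for $D^2 u$ when the leading coefficients depend only on $t$; (ii) pass to BMO$_x$ coefficients by a freezing/perturbation argument under Assumption \ref{assumption}; (iii) combine the Fefferman--Stein sharp function theorem with the weighted Hardy--Littlewood maximal function theorem to deduce \eqref{main estimate}. Uniqueness in part (ii) is a direct consequence of the a priori estimate, and existence then follows from the method of continuity, connecting the operator to a model Neumann operator (such as $-\partial_t + \Delta - \lambda$), whose solvability in $W^{1,2}_{p,\theta}$ can be obtained, for instance, by even reflection across $\{x_1 = 0\}$ combined with Krylov's $\bR^d$ theory \cite{Kry99}.

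For stage (i), I consider the simple equation $-u_t + a_{ij}(t) D_{ij} u = f$ with $D_1 u = 0$ on $\{x_1 = 0\}$ and split $D^2 u$ into $DD_1 u$ and $D_{x'}^2 u$. Setting $v = D_1 u$ and differentiating in $x_1$, the introduction already shows that $v$ satisfies the divergence form equation $-v_t + D_i(a_{ij}(t) D_j v) = D_1 f$ with $v = 0$ on $\{x_1 = 0\}$, so the mean oscillation estimate for $DD_1 u$ is obtained by directly invoking the Dirichlet divergence-form result of \cite{DK14}. For the tangential second derivatives, I would measure oscillations separately in $x_1$ and in $x'$: in the $x_1$ direction, integration by parts combined with a weighted Poincar\'e inequality should bound the oscillation by the weighted maximal function of $|DD_1 u|$; in the $x'$ directions, rewriting the equation as
\begin{equation*}
-u_t + \sum_{i,j \ge 2} a_{ij}(t) D_{ij} u = f - \sum_{j \ge 2}(a_{1j} + a_{j1})(t) D_{1j} u - a_{11}(t) D_{11} u
\end{equation*}
lets me view $u$, for each fixed $x_1$, as a solution of a non-divergence parabolic equation in $\bR \times \bR^{d-1}$, to which the interior unweighted mean oscillation estimate of \cite{Kry97} applies and bounds the $x'$-oscillations of $D_{x'}^2 u$ by maximal functions of $f$, $DD_1 u$, and $D_{x'}^2 u$.

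For stage (ii), I would freeze the coefficients at the center of each cylinder and rewrite the general equation as a simple-coefficient equation with right-hand side $f + (a_{ij}(t_0,x_0) - a_{ij}(t,x)) D_{ij} u$. Assumption \ref{assumption}($\rho$) with $\rho$ small relative to $d, \delta, p, \theta$, together with H\"older's inequality adapted to the weighted measure $\mu$, absorbs the perturbation term into the left-hand side. The resulting pointwise sharp function estimate for $D^2 u$ is then fed into the Fefferman--Stein theorem and the weighted Hardy--Littlewood maximal function theorem; the range $\theta \in (d-1,d-1+p)$ coincides precisely with the Muckenhoupt $A_p$ range for the one-dimensional weight $x_1^{\theta-d}$, which legitimizes both tools. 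The lower-order terms $b_i D_i u + c u$ are handled by taking $\lambda \ge \lambda_0$ large enough and using an interpolation inequality, while in the pure case $b_i = c = 0$ and $a_{ij} = a_{ij}(t)$ no absorption is needed, so one can take $\lambda_0 = 0$.

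The hardest step, I expect, will be the tangential estimate on $D_{x'}^2 u$. Controlling the $x_1$-oscillation forces careful tracking of the weight $x_1^{\theta - d}$ through the Poincar\'e step near $\{x_1 = 0\}$, since the weight either degenerates or blows up there depending on the sign of $\theta - d$; and the unweighted interior estimate from \cite{Kry97} used in the $x'$-direction must be meshed with the weighted framework so that the final sharp function bound carries the correct weight. These two ingredients are precisely where the Neumann boundary condition, in contrast to the Dirichlet case treated in \cite{DK14}, demands new care, because here it is $DD_1 u$ (rather than $u$ or $Du$) that vanishes at the boundary after differentiation, forcing a different interplay between tangential derivatives and the weight.
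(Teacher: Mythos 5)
Your plan reproduces the paper's argument almost verbatim through the mean-oscillation stage: the split of $D^2u$ into $DD_1u$ (handled by noting $v=D_1u$ solves a Dirichlet divergence-form equation and invoking \cite{DK14}) and $D^2_{x'}u$ (handled by separating $x_1$-oscillations via integration by parts and a weighted Poincar\'e inequality from $x'$-oscillations via Krylov's unweighted interior estimate applied slicewise in $x_1$), followed by coefficient-freezing with H\"older/Assumption \ref{assumption} and then Fefferman--Stein plus Hardy--Littlewood in the $A_p$ range $\theta\in(d-1,d-1+p)$. This is exactly Sections 3--4 of the paper.

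There is, however, a genuine gap in how you pass from the $D^2u$ bound to the full estimate \eqref{main estimate}. The sharp/maximal function machinery produces, after a partition of unity to remove the localization to $Q^+_{R_0}(X_1)$, only
\begin{equation*}
\|u_t\|_{p,\theta}+\|M^{-1}D_1u\|_{p,\theta}+\|D^2u\|_{p,\theta}\le C_0\|f\|_{p,\theta}+C_1\|u\|_{p,\theta},
\end{equation*}
with a lower-order term $C_1\|u\|_{p,\theta}$ on the right that cannot be absorbed for $\lambda=0$, and it yields no control whatsoever on $\lambda\|u\|_{p,\theta}$ or $\sqrt{\lambda}\|Du\|_{p,\theta}$. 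Your proposal writes the model equation without $\lambda$ and then asserts that ``taking $\lambda\ge\lambda_0$ large and interpolation'' finishes the job, but interpolation only gives $\sqrt{\lambda}\|Du\|$ once $\lambda\|u\|$ is already bounded; it does not produce $\lambda\|u\|\le C\|f\|$ by itself. The paper fills this gap with Agmon's trick (Step 3 of the proof of Theorem \ref{main}): one extends $u$ to $\hat u(t,x,y)=u(t,x)\zeta(y)\cos(\sqrt{\lambda}y)$ in dimension $d+2$, applies the $\lambda$-free estimate there, and reads $\lambda\|u\|_{p,\theta}\le C_0\|f\|_{p,\theta}+C_1(1+\sqrt\lambda)\|u\|_{p,\theta}$ off the $D_{yy}\hat u$ component, which is then absorbed for $\lambda$ large. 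Without this (or without running the mean-oscillation argument for the equation with the $-\lambda u$ term already present, as in \cite{Kry97}), the claimed estimate for $\lambda\|u\|$ and $\sqrt\lambda\|Du\|$ does not follow. A second, smaller imprecision: citing \cite{Kry99} for the model Neumann problem after even reflection is off target, since \cite{Kry99} treats the weighted half-space Dirichlet problem, not the whole space with the reflected $A_p$ weight $|x_1|^{\theta-d}$; the paper avoids this by appealing directly to the method of continuity once the a priori estimate is in hand.
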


We now present our results for elliptic equations, where the coefficients are independent of $t$.
Since Assumption \ref{assumption} ($\rho$) does not concern the regularity of coefficients in $t$, we still require the coefficients to satisfy Assumption \ref{assumption} ($\rho$).

By adapting, for example, the proof of \cite[Theorem 2.6]{Kry97} to the results above for parabolic equations, i.e., by regarding elliptic equations as steady state parabolic equations, we obtain the following theorem for elliptic equations.

\begin{theorem}%\label{main_e}
Let $1<p<\infty$ and $\theta \in (d-1,d-1+p)$ be constants. Then there exist constants $\rho = \rho(d,\delta,p,\theta)>0$ and $\lambda_0 = \lambda_0(d,\delta,p,\theta,K,R_0)\ge 0$ such that under Assumption \ref{assumption} ($\rho$)
the following assertions hold.

(i) Suppose that $u\in W_{p,\theta}^2(\bR^d_+)$ satisfies
\begin{equation}
a_{ij}D_{ij}u+b_iD_iu+cu-\lambda u=f\label{eq 4.081_e}
\end{equation}
in $\bR_+^d$ with the Neumann boundary condition $D_1u=0$ on $\{x_1=0\}$, where $f\in L_{p,\theta}(\bR^d_+)$. Then we have
\begin{equation*}
\|M^{-1}D_1u\|_{p,\theta}+\|D^2 u\|_{p,\theta}+\sqrt{\lambda}\|Du\|_{p,\theta}+\lambda\|u\|_{p,\theta}\le C_0\|f\|_{p,\theta}%\label{main estimate_e}
\end{equation*}
provided that $\lambda\ge \lambda_0$,
where $\|\cdot\|_{p,\theta}=\|\cdot\|_{L_{p,\theta}(\bR^d_+)}$ and $C_0=C_0(d, \delta, p,\theta)$. In particular, when $b_i=c=0$ and $a_{ij}$ are constant, we can take $\lambda_0=0$.

(ii) For any $\lambda>\lambda_0$ and $f\in L_{p,\theta}(\bR^d_+)$, there is a unique solution $u\in W_{p,\theta}^2(\bR^d_+)$ to the equation \eqref{eq 4.081_e}.
\end{theorem}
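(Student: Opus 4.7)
The plan is to adapt the approach of \cite[Theorem 2.6]{Kry97} and reduce the elliptic theorem to the parabolic Theorem \ref{main} via a time-extension trick: regard a time-independent function $u(x)$ as the spatial profile of the parabolic function $v(t,x) := e^{\mu t}u(x)$ on $(-\infty,0) \times \bR^d_+$ for a suitable $\mu > 0$. Since the elliptic coefficients $a_{ij}(x)$, $b_i(x)$, $c(x)$ are already time-independent, Assumption \ref{assumption}$(\rho)$ and the boundedness hypothesis transfer verbatim to the parabolic setting.

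For part (i), a direct computation shows that if $u$ satisfies \eqref{eq 4.081_e} with Neumann boundary condition, then $v$ satisfies
$$
-v_t + a_{ij}D_{ij}v + b_i D_i v + c v - (\lambda - \mu) v = e^{\mu t} f \quad \text{in } (-\infty,0)\times \bR^d_+
$$
together with $D_1 v = 0$ on $\{x_1 = 0\}$. The separable structure of $v$ gives $\|D^\beta v\|_{\bL_{p,\theta}(-\infty,0)} = (p\mu)^{-1/p}\|D^\beta u\|_{L_{p,\theta}(\bR^d_+)}$ for every spatial multiindex $\beta$, as well as $\|v_t\|_{\bL_{p,\theta}} = \mu \|v\|_{\bL_{p,\theta}}$ and the corresponding identity for $e^{\mu t}f$. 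Applying Theorem \ref{main}(i) to $v$ with $\lambda$ there replaced by $\lambda - \mu$, dividing through by $(p\mu)^{-1/p}$, and combining $\mu\|u\|_{p,\theta} + (\lambda-\mu)\|u\|_{p,\theta} = \lambda\|u\|_{p,\theta}$, one obtains the desired bound with $\sqrt{\lambda - \mu}$ in place of $\sqrt{\lambda}$. Fixing $\mu = 1$ and choosing the elliptic $\lambda_0$ to be the parabolic $\lambda_0$ plus one (large enough that $\sqrt{\lambda - 1} \ge c\sqrt{\lambda}$) absorbs the discrepancy into $C_0$. In the special case $b_i = c = 0$ with $a_{ij}$ constant, the parabolic threshold is zero, so any $\mu \in (0,\lambda)$ is admissible, and letting $\mu \to 0^+$ recovers $\lambda_0 = 0$.

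For existence in (ii), given $f \in L_{p,\theta}(\bR^d_+)$, the plan is to apply Theorem \ref{main}(ii) to the above parabolic equation with right-hand side $F(t,x) := e^{\mu t}f(x) \in \bL_{p,\theta}(-\infty,0)$ and parameter $\Lambda := \lambda - \mu$, obtaining a unique $V \in W^{1,2}_{p,\theta}(-\infty,0)$. Since the coefficients depend only on $x$, for each $s > 0$ the shifted function $\tilde V(t,x) := e^{-\mu s}V(t+s,x)$ again solves the same parabolic equation with the same right-hand side on $(-\infty,-s) \times \bR^d_+$; Theorem \ref{main}(ii)'s uniqueness on that shifted interval then forces $\tilde V = V|_{(-\infty,-s)}$, which is precisely the statement that $e^{-\mu t}V(t,x)$ is a.e.\ independent of $t$. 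Setting $u(x)$ equal to this common value yields $V(t,x) = e^{\mu t}u(x)$; substituting back and cancelling $e^{\mu t}$ shows $u$ solves \eqref{eq 4.081_e} with Neumann condition, and the $\bL_{p,\theta}$-integrability of $V$, $DV$, $D^2V$ transfers into $L_{p,\theta}$-integrability of $u$, $Du$, $D^2u$, so $u \in W^2_{p,\theta}(\bR^d_+)$. Uniqueness of the elliptic solution follows by applying (i) to the difference of two solutions.

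The main technical point will be the time-translation step: one must verify that both $V|_{(-\infty,-s)}$ and $\tilde V$ lie in $W^{1,2}_{p,\theta}(-\infty,-s)$ and solve the same parabolic problem there, so that Theorem \ref{main}(ii)'s uniqueness can be invoked legitimately, and then patch the resulting a.e.\ identities across all $s > 0$ into the global product structure $V(t,x) = e^{\mu t}u(x)$. Everything else is routine bookkeeping between the norms $\bL_{p,\theta}(-\infty,0)$ and $L_{p,\theta}(\bR^d_+)$.
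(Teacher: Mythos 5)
Your reduction to the parabolic theorem is precisely the ``steady-state parabolic'' idea the paper invokes via \cite[Theorem 2.6]{Kry97}, and the exponential lifting $v(t,x)=e^{\mu t}u(x)$ together with the shift $\lambda\mapsto\lambda-\mu$, plus the time-translation and uniqueness argument for part (ii), is a correct and clean realization of it.

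There is, however, one small but genuine gap in your treatment of the ``in particular'' clause. For $b_i=c=0$ and $a_{ij}$ constant the theorem asserts $\lambda_0=0$, so the a priori estimate in (i) must hold at $\lambda=0$, not only for $\lambda>0$. Your prescription ``take $\mu\in(0,\lambda)$ and let $\mu\to0^+$'' is vacuous when $\lambda=0$, and invoking Theorem \ref{main}(i) with parabolic parameter $\lambda-\mu=-\mu<0$ is not permitted. The fix is to keep the parabolic parameter equal to $0$ and move the mismatch to the right-hand side: since $a_{ij}D_{ij}u=f$, the function $v=e^{\mu t}u$ satisfies
\begin{equation*}
-v_t+a_{ij}D_{ij}v=e^{\mu t}(f-\mu u)\quad\text{in }(-\infty,0)\times\bR^d_+,
\end{equation*}
with $D_1v=0$ on $\{x_1=0\}$, so Theorem \ref{main}(i) with $\lambda_{\mathrm{par}}=0$ gives, after dividing by the factor $(p\mu)^{-1/p}$ coming from the separable structure,
\begin{equation*}
\mu\|u\|_{p,\theta}+\|M^{-1}D_1u\|_{p,\theta}+\|D^2u\|_{p,\theta}\le C_0\|f\|_{p,\theta}+C_0\,\mu\|u\|_{p,\theta};
\end{equation*}
since $u\in W^2_{p,\theta}(\bR^d_+)$ guarantees $\|u\|_{p,\theta}<\infty$, sending $\mu\to0^+$ yields the claimed bound at $\lambda=0$. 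With this supplement the argument is complete; the patching over a countable dense set of shifts $s$ in part (ii), which you flag as the main technical point, is indeed needed to conclude that $e^{-\mu t}V(t,x)$ is time-independent a.e., but it is routine.
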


\begin{comment}
Let $\bQ$ be the collection of all $Q_r(t,x)\subset \Omega$. For $f\in L_{1,loc}(\Omega, \mu)$ we define
\begin{equation*}
f_Q=\dashint_Q f d\mu,\quad \bM f(t,x)=\sup_{Q}\dashint_Q fd\mu,\quad (f)^\sharp (t,x)=\sup_Q\dashint_Q|f-f_Q|d\mu,
\end{equation*}
where the supremeum is taken for all $Q\in \bQ$ containing $(t,x)$.
We introduce Fefferman-Stein and Hardy-Littlewood theorems in weighted $L_p$-spaces. Denote

%\setcounter{tocdepth}{1} \tableofcontents
Then
\begin{equation*}
\|f\|_{L_p(\Omega, \mu)}\le N\|f^\sharp\|_{L_p(\Omega, \mu)},\quad \|\bM f\|_{L_p(\Omega, \mu)}\le N\|f\|_{L_p(\Omega, \mu)},
\end{equation*}
if $f\in L_p(\Omega, \mu)$ for $p>1$ and $N=N(d, p)$. As is well known, the first inequality above is due to the Fefferman-Stein theorem on sharp functions and second one is the Hardy-Littlewood maximal function theorem.
\end{comment}

%===============================================================================%
\section{equations with coefficients independent of $x$}
\label{sec3}

In this section, we deal with equations in the form
\begin{align}
&u_t-a_{ij}(t)D_{ij}u=f\quad \text{in}\quad \bR^{d+1}_+,\label{eq1}\\
&D_1u=0\quad \text{on}\quad \{x_1=0\}\label{bc1}.
\end{align}
Note that now the coefficients $a_{ij}$  depend only on $t$. Let us state several technical lemmas.
%%%%%%%%%%%%%%%%%%%
The first one is Hardy's inequality, which can be found in \cite{Kuf85}.
\begin{lemma}\label{hardy}
Let $p\in (1,\infty)$, $\theta\in(d-1, d-1+p)$, and $v\in C_{\text{loc}}^\infty({\overline{\bR^{d+1}_+}})$  such that $v=0$ on $\{x_1=0\}$.
Then
\begin{equation*}
\int_{B_r^+}|v|^py_1^{\theta-d-p}\,dy\le C\int_{B_r^+}|D_1 v|^{p}y_1^{\theta-d}\,dy
\end{equation*}
for any $r\in (0,\infty]$, where $C=C(d, p,\theta)$.
\end{lemma}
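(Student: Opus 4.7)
The inequality is the classical weighted Hardy inequality, and my plan is to reduce it to a one-dimensional statement by Fubini. Since $B_r^+ = (0, r) \times B_r'(0)$ is a product domain and the weight $y_1^{\theta-d}$ depends only on $y_1$, it suffices to prove, for each $y' \in B_r'(0)$ and $g(y_1) := v(y_1, y')$ (which lies in $C^\infty([0,r])$ and satisfies $g(0) = 0$), the one-dimensional inequality
\begin{equation*}
\int_0^r |g(y_1)|^p y_1^{\theta - d - p} \, dy_1 \le C \int_0^r |g'(y_1)|^p y_1^{\theta - d} \, dy_1,
\end{equation*}
and then integrate the result in $y'$. The case $r = \infty$ is handled identically.

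For the one-dimensional inequality I set $\alpha := d + p - \theta$, so that $-\alpha = \theta - d - p$ and $p - \alpha = \theta - d$ and the target becomes $\int_0^r |g|^p t^{-\alpha}\, dt \le C \int_0^r |g'|^p t^{p-\alpha}\, dt$. The hypothesis $\theta \in (d-1, d-1+p)$ is equivalent to $\alpha \in (1, p+1)$, and both endpoints of this range play essential roles: $\alpha > 1$ is what makes the integration by parts below produce the factor $1/(\alpha-1)$, while $\alpha < p+1$ guarantees that $|g(t)|^p = O(t^p)$ beats the singularity $t^{1-\alpha}$ at the origin, so that the boundary term there vanishes.

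The one-dimensional proof itself proceeds by integrating $|g|^p t^{-\alpha}$ by parts against the antiderivative $\tfrac{1}{1-\alpha} t^{1-\alpha}$ of $t^{-\alpha}$; the boundary contribution at $t = 0$ vanishes by the preceding observation, and the contribution at $t = r$ has a favorable sign and can simply be dropped. This yields
\begin{equation*}
\int_0^r |g|^p t^{-\alpha} \, dt \le \frac{p}{\alpha - 1}\int_0^r t^{1-\alpha}|g|^{p-1}|g'|\, dt.
\end{equation*}
Splitting $t^{1-\alpha} = t^{-\alpha(p-1)/p} \cdot t^{(p-\alpha)/p}$ and applying H\"older's inequality with conjugate exponents $p/(p-1)$ and $p$ produces an estimate of the form $I \le \tfrac{p}{\alpha-1} I^{(p-1)/p} J^{1/p}$, which rearranges into $I \le (p/(\alpha-1))^p J$. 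Integrating the resulting pointwise bound over $y' \in B_r'(0)$ gives the lemma with a constant $C = C(d, p, \theta)$ that is independent of $r$. Since this is a well-known fact (cf.\ Kufner \cite{Kuf85}), I anticipate no real obstacle; the only mild care needed is in justifying the boundary behavior at $t = 0$, which follows immediately from $g \in C^\infty([0,r])$ with $g(0) = 0$ and the sharp condition $\alpha < p+1$.
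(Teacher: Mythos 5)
Your proof is correct; the paper itself gives no proof of this lemma and simply refers to Kufner's book \cite{Kuf85}, so there is no in-paper argument to compare against. Your self-contained derivation is the standard one: reduce to a one-dimensional weighted Hardy inequality via Fubini, integrate $|g|^p t^{-\alpha}$ by parts against $t^{1-\alpha}/(1-\alpha)$, drop the boundary terms (the one at $0$ vanishes by $g(0)=0$ together with $\alpha<p+1$; the one at $r$ has the right sign because $\alpha>1$), and close the estimate with H\"older after the exponent split $t^{1-\alpha}=t^{-\alpha(p-1)/p}\,t^{(p-\alpha)/p}$. The identification $\alpha=d+p-\theta$ converts $\theta\in(d-1,d-1+p)$ into exactly the window $\alpha\in(1,p+1)$ needed for both boundary terms, which is a clean way to see why the hypothesis on $\theta$ is sharp. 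Two minor points worth making explicit if you write this up: the rearrangement from $I\le C\,I^{(p-1)/p}J^{1/p}$ to $I\le C^pJ$ uses $I<\infty$, which holds for finite $r$ precisely because $\alpha<p+1$; and the case $r=\infty$ is most cleanly obtained by proving the estimate for every finite $r$ with a constant independent of $r$ and then letting $r\to\infty$ by monotone convergence, rather than by an integration by parts directly on $(0,\infty)$ (which would require controlling a boundary term at infinity).
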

%%%%%%%%%%%%%%%%%%
We summarize Lemmas 4.2 and 4.3 in \cite{DK14} as the following results, which were proved by localizing the results in \cite{Kimdo07, Dong11} and using the Sobolev embedding theorem.

\begin{lemma}\label{lemma 3.2}
Let $p\in (1,\infty)$ and $\alpha \in (0,1)$ be constants. Assume that $u\in C_{\text{loc}}^\infty(\overline{\bR^{d+1}_+})$ satisfies \eqref{eq1} in $Q_2^+$ with $f=0$, and $u=0$ on $\{x_1=0\}$. Then we have
\begin{equation*}
\|Du\|_{C^{\alpha/2, \alpha}(Q_1^+)}\le C\|u\|_{L_p(Q_2^+)},
\end{equation*}
where $C=C(d,\delta,p,\alpha)$. If $Q_1$ and $Q_2$ replace $Q_1^+$ and $Q_2^+$, respectively, then
$$
\|Du\|_{C^{\alpha/2, \alpha}(Q_1)}\le C\|Du\|_{L_p(Q_2)}.
$$
\end{lemma}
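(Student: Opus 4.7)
The plan is to exploit the translation invariance of the equation in the $x'$-variables, bootstrap with the classical $L_p$-estimate for parabolic equations with time-measurable coefficients, and conclude by parabolic Sobolev embedding. The basic input will be the interior/boundary $L_p$-solvability estimate for the model problem
$$
v_t - a_{ij}(t) D_{ij} v = F \quad \text{in } Q_r^\star, \qquad v=0 \text{ on } \{x_1=0\} \text{ if } \star = +,
$$
namely, for $r_1 < r_2$,
$$
\|v_t\|_{L_p(Q_{r_1}^\star)} + \|D^2 v\|_{L_p(Q_{r_1}^\star)} \le C\bigl(\|F\|_{L_p(Q_{r_2}^\star)} + \|v\|_{L_p(Q_{r_2}^\star)}\bigr),
$$
which is available from the parabolic $L_p$-theory of Krylov and its extensions in \cite{Kimdo07, Dong11, Kry97} in both the Dirichlet ($\star=+$) and interior versions.

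Since $a_{ij}$ depends only on $t$, for every multi-index $\beta$ involving only the tangential variables $x'$, the function $v = D_{x'}^\beta u$ again satisfies the same homogeneous equation and, in the half-space case, retains the zero Dirichlet trace on $\{x_1=0\}$. Applying the estimate above to $v = D_{x'}^\beta u$ on a nested family of half-cylinders $Q_{r_k}^+$ with $r_k \downarrow r_\infty > 1$, and chaining the bounds with the interpolation inequality $\|Dw\|_{L_p} \le \varepsilon \|D^2 w\|_{L_p} + C_\varepsilon \|w\|_{L_p}$ to absorb first derivatives, will yield
$$
\|\partial_t D_{x'}^\beta u\|_{L_p(Q_{r_\infty}^+)} + \|D^2 D_{x'}^\beta u\|_{L_p(Q_{r_\infty}^+)} \le C(\beta) \|u\|_{L_p(Q_2^+)} \quad \text{for every } \beta.
$$
To extend this to arbitrarily many derivatives in $x_1$, I would use the ellipticity bound $a_{11}(t) \ge \delta$ to solve the equation for $D_{11} u$:
$$
D_{11} u = a_{11}(t)^{-1}\Bigl( u_t - 2 \sum_{j\ge 2} a_{1j}(t) D_{1j} u - \sum_{j,k\ge 2} a_{jk}(t) D_{jk} u\Bigr),
$$
and apply this identity recursively (together with $\partial_t u = a_{ij}(t) D_{ij} u$) to express every $\partial_t^\ell D^\alpha u$ as a linear combination, with bounded $t$-dependent coefficients, of terms of the form $\partial_t^{\ell'} D_{x'}^\gamma D_1^j u$ with $j \le 1$, all of which have already been controlled. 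Parabolic Sobolev embedding then upgrades arbitrarily many derivatives in $L_p$ into $\|Du\|_{C^{\alpha/2,\alpha}(Q_1^+)} \le C \|u\|_{L_p(Q_2^+)}$.

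The interior estimate on $Q_1 \subset Q_2$ would be handled by an identical bootstrap; the only adjustment is that, in the absence of a boundary condition, one applies the interior $L_p$-estimate directly to $v = D_i u$ (which again solves the homogeneous equation) to obtain a bound in terms of $\|Du\|_{L_p(Q_2)}$ rather than $\|u\|_{L_p(Q_2)}$, and the subsequent bootstrap is unchanged. The step I expect to be the most delicate is the bookkeeping in the recursive use of the equation to trade $D_1$-derivatives for tangential and time derivatives; however, since the coefficients are constant in $x$, no low-regularity remainder terms appear at any stage and the argument closes cleanly.
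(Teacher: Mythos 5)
Your proposal correctly identifies the two essential ingredients -- the local $W^{1,2}_p$ estimate for time-measurable coefficients (from \cite{Kimdo07, Dong11}) and parabolic Sobolev embedding -- which is exactly the route the paper indicates when citing \cite[Lemmas~4.2, 4.3]{DK14}. However, the middle portion of your argument, where you try to generate arbitrarily many $x_1$-derivatives by combining tangential bootstrapping with the algebraic identity for $D_{11}u$, has a genuine circularity. Solving the equation for $D_{11}u$ and differentiating in $x_1$ gives
$$
a_{11}D_{111}u = D_1u_t - 2\sum_{j\ge 2}a_{1j}D_{11j}u - \sum_{j,k\ge 2}a_{jk}D_{1jk}u,
$$
and the only way your scheme can produce $D_1u_t$ is via the equation, $D_1u_t = a_{ij}D_{ij1}u = a_{11}D_{111}u + (\text{controlled terms})$. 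Substituting one identity into the other yields the tautology $D_{111}u = D_{111}u$; it does not bound $D_{111}u$ in terms of already-controlled quantities. Relatedly, your claim that all terms of the form $\partial_t^{\ell'}D_{x'}^\gamma D_1^j u$ with $j\le 1$ ``have already been controlled'' is not justified: the bootstrap on tangential derivatives yields $\partial_t D_{x'}^\beta u$ and $D^2 D_{x'}^\beta u$ in $L_p$, but not $\partial_t D_1 D_{x'}^\gamma u$, and $\partial_t^{\ell'}$ for $\ell'\ge 2$ is unavailable because $a_{ij}(t)$ is merely measurable in $t$ and cannot be differentiated.

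There is a second, independent shortfall: your bootstrap keeps the integrability exponent $p$ fixed and only increases the number of derivatives. The embedding that produces H\"older continuity of $Du$ from $W^{1,2}_q$ requires $q>d+2$ and gives the exponent $\beta = 1-(d+2)/q$; to reach an arbitrary $\alpha\in(0,1)$ starting from an arbitrary $p\in(1,\infty)$ one must \emph{raise} the integrability, not just accumulate $x$-derivatives at fixed $p$. (In particular, $u_t\in L_p$ with $p$ near $1$ cannot yield time--H\"older exponent $\alpha/2$ near $1/2$ no matter how many spatial derivatives are in $L_p$.) The standard and cleaner route, and the one implicit in the paper's citation, dispenses with the tangential-derivative bookkeeping entirely: apply the local $W^{1,2}_p$ estimate on a shrinking chain of cylinders, use the parabolic Sobolev embedding $W^{1,2}_p\hookrightarrow L_{p_1}$ to raise the exponent, iterate finitely many times until $u\in W^{1,2}_q(Q_{3/2}^{+})$ with $q$ large enough that $1-(d+2)/q>\alpha$, and then invoke $W^{1,2}_q\hookrightarrow C^{(1+\beta)/2,1+\beta}$ (equivalently $Du\in C^{\beta/2,\beta}$). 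This yields the stated estimate directly and avoids both the circularity and the fixed-$p$ limitation. The interior case is identical; there, as you note, one bootstraps from $Du$ rather than $u$, which is why the right-hand side is $\|Du\|_{L_p(Q_2)}$.
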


For a domain $\cD\subset \bR^{d+1}_+$, we denote $(u)_{\cD}$ to be the average of $u$ in $\cD$ with respect to the measure $\mu(dx\,dt)=x_1^{\theta-d}\,dx\,dt$. Precisely,
\begin{equation*}
(u)_{\cD}=\frac{1}{\mu(\cD)}\int_{\cD}u(t,x)\mu(dx\,dt)=\dashint_{\cD}u(t,x)\mu(dx\,dt),
\end{equation*}
where
$$
\mu(\cD)=\int_{\cD}\mu(dx\,dt).
$$

Using Lemma \ref{lemma 3.2}, we obtain the following mean oscillation type estimate.
\begin{corollary}\label{cor 3.2}
Let $p\in (1,\infty)$, $\alpha\in (0,1)$, $y_1\in[0,1]$, and $\theta\in (d-1,d-1+p)$ be constants. Assume that $u\in C_{\text{loc}}^\infty(\overline{\bR^{d+1}_+})$ satisfies
\begin{equation*}
u_t-a_{ij}(t)D_{ij}u=0\quad \text{in}\quad Q_{4}^+
\end{equation*}
with the Dirichlet boundary condition $u=0$ on $\{x_1=0\}$. Then there exists a constant $C=C(d,\delta,p,\theta,\alpha)$ such that for any $r<1$,
\begin{equation*}
(|Du- (Du)_{Q^+_r(y_1)}|^p)_{Q^+_r(y_1)}\le Cr^{\alpha p}\int_{Q_4^+}|D_1 u|^p\,\mu(dx\,dt).
\end{equation*}
\end{corollary}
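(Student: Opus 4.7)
The plan is to derive the estimate in two stages: first apply a rescaled version of Lemma~\ref{lemma 3.2} to obtain a parabolic $C^{\alpha/2,\alpha}$ bound for $Du$ on the cylinder $Q_2^+$, and then use Hardy's inequality (Lemma~\ref{hardy}) to convert $\|u\|_{L_p(Q_4^+)}$ into the weighted integral $\int_{Q_4^+}|D_1u|^p\,\mu(dx\,dt)$ appearing in the target. The decay factor $r^{\alpha p}$ will come purely from the parabolic diameter of $Q_r^+(y_1)$, which is at most $Cr$ under the hypotheses $r<1$ and $y_1\in[0,1]$.

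For the first stage, I would apply Lemma~\ref{lemma 3.2} to the rescaled function $\tilde u(s,z):=u(4s,2z)$. One checks that $\tilde u$ solves a parabolic equation of the same form in $Q_2^+$ and vanishes on $\{z_1=0\}$, so Lemma~\ref{lemma 3.2} yields $\|D\tilde u\|_{C^{\alpha/2,\alpha}(Q_1^+)}\le C\|\tilde u\|_{L_p(Q_2^+)}$, and undoing the scaling gives $[Du]_{\alpha/2,\alpha,Q_2^+}\le C\|u\|_{L_p(Q_4^+)}$. Since the hypotheses force $Q_r^+(y_1)\subset Q_2^+$, this bound implies $|Du(X_1)-Du(X_2)|\le Cr^\alpha[Du]_{\alpha/2,\alpha,Q_2^+}$ for all $X_1,X_2\in Q_r^+(y_1)$. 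The weighted mean $(Du)_{Q_r^+(y_1)}$ is, componentwise, a convex combination of values of $Du$ on $Q_r^+(y_1)$, so $|Du(X)-(Du)_{Q_r^+(y_1)}|$ is dominated pointwise by the oscillation of $Du$ on $Q_r^+(y_1)$. Raising to the $p$-th power and taking the weighted average gives
\begin{equation*}
(|Du-(Du)_{Q_r^+(y_1)}|^p)_{Q_r^+(y_1)}\le Cr^{\alpha p}\|u\|_{L_p(Q_4^+)}^p.
\end{equation*}

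For the second stage, the hypothesis $\theta<d-1+p$ gives $p+d-\theta>0$, so on $Q_4^+$ one has $x_1^{\theta-d-p}\ge 4^{\theta-d-p}>0$ and hence $\|u\|_{L_p(Q_4^+)}^p\le C\int_{Q_4^+}|u|^p x_1^{\theta-d-p}\,dx\,dt$. Applying Lemma~\ref{hardy} slicewise in $x_1\in(0,4)$ at each fixed $(t,x')$ (using $u(t,0,x')=0$) and integrating in $t$ and $x'$ then yields $\int_{Q_4^+}|u|^p x_1^{\theta-d-p}\,dx\,dt\le C\int_{Q_4^+}|D_1u|^p\,\mu(dx\,dt)$. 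Combining these two bounds with the first-stage estimate produces the corollary. I do not anticipate a serious obstacle; the only points requiring care are the rescaling that promotes Lemma~\ref{lemma 3.2} from $Q_1^+\subset Q_2^+$ to $Q_2^+\subset Q_4^+$, and the elementary observation that the weighted mean of a continuous function on a cylinder lies between its pointwise infimum and supremum, so that the oscillation of $Du$ dominates $|Du-(Du)_{Q_r^+(y_1)}|$ pointwise.
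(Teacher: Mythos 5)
Your proposal is correct and follows the same route as the paper: a H\"older estimate for $Du$ from Lemma~\ref{lemma 3.2} (rescaled from $Q_1^+\subset Q_2^+$ to $Q_2^+\subset Q_4^+$), the inclusion $Q_r^+(y_1)\subset Q_2^+$ to turn that into a mean-oscillation bound of order $r^{\alpha p}$, and then $\theta-d-p<0$ together with Hardy's inequality (Lemma~\ref{hardy}) to pass from $\|u\|_{L_p(Q_4^+)}^p$ to $\int_{Q_4^+}|D_1u|^p\,\mu(dx\,dt)$. The only difference is that you spell out the rescaling and the averaging-versus-oscillation step, which the paper leaves implicit.
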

\begin{proof}
From Lemma \ref{lemma 3.2}, we obtain
\begin{equation*}
(|Du - (D u)_{Q^+_r(y_1)}|^p)_{Q^+_r(y_1)}\le r^{\alpha p}[Du]^p_{\frac{\alpha}{2},\alpha,Q_2^+} \le Cr^{\alpha p}\| u\|^p_{L_p(Q_4^+)}.
\end{equation*}
Since $\theta-d-p<0$ and by Lemma \ref{hardy}, we have
$$ \|u\|^p_{L_p(Q_4^+)}\le C\int_{Q_4^+}|u|^px_1^{\theta-d-p}\,dx\,dt\le C\int_{Q_4^+}|D_1 u|^p\,\mu(dx\,dt).$$
Combining the two inequalities above, we prove the corollary.
\end{proof}
%%%%%%%%%%%%%%%%%%%%%%%%%%
\begin{comment}
The following parabolic Poncar\'e inequality is useful in our proof.
\begin{lemma}\label{poincare}
Let $u\in C^\infty(\overline{\bR^{d+1}_+})$, $D_1u=0$ on $\{x_1=0\}$, $p\ge 1$, $\theta\in (d-1, d-1+p),$ and $\kappa \in(1,\infty)$.%and $$\frac{1}{q}\le \frac{2}{d+2}+\frac{1}{p}.$$
Then there exists a constant $C=C(d,p,\kappa)$ such that for any $r>0$, we have
\begin{align*}
&\int_{Q_r(r)}|u(t,x)- (u)_{Q_r(r)}-\sum_{i=2}^dx_i(D_i u)_{Q_r(r)}|^p\mu(dx\,dt)\\
&\le Cr^{2p}\int_{Q_r(r)}(|D^2 u|^p+|u_t|^p)\mu(dx\,dt).
\end{align*}
\end{lemma}
\begin{proof}
Since $D_1 u=0$ on $\{x_1=0\},$ after an even extension we have $(D_1 u)_{\tilde{Q}_r(r)}=0$, where $\tilde{Q}_r(r)=(-r^2, 0)\times (-r, r)\times B^\prime(r).$ Then we following the proof of Lemma 4.4 \cite{KimLee13} to prove
\begin{align*}
&\int_{\tilde{Q}_r(r)}|u(t,x)- (u)_{\tilde{Q}_r(r)}-\sum_{i=2}^dx_i(D_i u)_{\tilde{Q}_r(r)}|^p\tilde{\mu}(dx\,dt)\\
&\le Cr^{2p}\int_{\tilde{Q}_r(r)}(|D^2 u|^p+|u_t|^p)\tilde{\mu}(dx\,dt),
\end{align*}
where $\tilde{\mu}(\,dx\,dt)=|x_1|^{\theta-d}\,dx\,dt.$ By a direct calculation, we obtain the lemma immediately.
\end{proof}
\end{comment}
%%%%%%%%%%%%%%%%%%%%%%%%%

Before we state the next theorem, we introduce a function space.
For $\lambda \ge 0$, we denote $u\in \cH_{p,\theta}^{1,\lambda}(-\infty,T)$ if
$$\sqrt{\lambda}u,\,\,M^{-1}u,\,\,Du\in \bL_{p,\theta}(-\infty,T),$$
and $u_t\in M^{-1}\bH^{-1}_{p,\theta}(-\infty,T)+\sqrt{\lambda}\bL_{p,\theta}(-\infty,T)$.
Let us write $\cH_{p,\theta}^1(-\infty,T)$ if $\lambda = 0$.
By \cite[Remark 5.3]{Kry99}, one can find $f, g=(g_1,\ldots,g_d)\in \bL_{p,\theta}(-\infty,T)$ such that
$$u_t=D_ig_i+\sqrt{\lambda}f$$
in $(-\infty,T)\times \bR^d_+$. We set
\begin{align*}
&\|u\|_{\cH_{p,\theta}^{1,\lambda}(-\infty,T)}=\inf\Big\{\sqrt{\lambda}\|u\|_{p,\theta}+
\|M^{-1}u\|_{p,\theta}+\|Du\|_{p,\theta}\\
&\qquad \quad +\|g\|_{p,\theta}+\|f\|_{p,\theta}:u_t=D_ig_i+\sqrt{\lambda}f\Big\}.
\end{align*}

Now we state a special case ($a^{ij} = a^{ij}(t)$) of \cite[Theorem 3.9]{DK14}, where $a^{ij}$ are allowed to be merely measurable in $(t,x_1)$ except that $a^{11}=a^{11}(t)$ or $a^{11}=a^{11}(x_1)$.
Note that in the theorem below there is no specification of the boundary condition, but functions in the solution space $\cH_{p,\theta}^{1,\lambda}(-\infty,T)$ necessarily satisfies $u=0$ on the boundary. Hence the theorem is about the Dirichlet boundary value problem for divergence type equations in weighted Sobolev spaces.

\begin{theorem}\label{theorem 3.8}
Let $T\in (-\infty, \infty]$, $\lambda\ge 0$, $p\in (1,\infty)$, $\theta\in (d-1,d-1+p)$, and $u\in \cH_{p,\theta}^{1,\lambda}(-\infty, T)$ satisfy
\begin{equation}
u_t-D_i(a_{ij}D_j u)+\lambda u=D_ig_i+f \label{eq 3.18}
\end{equation}
in $(-\infty,T)\times \bR_+^{d}$, where $g=(g_1,g_2,\ldots,g_d)$, $g,f\in \mathbb{L}_{p,\theta}(-\infty,T)$, and $f\equiv 0$ if $\lambda=0$. Then we have
\begin{equation*}
\sqrt{\lambda}\|u\|_{p,\theta}+\|M^{-1}u\|_{p,\theta}+\|Du\|_{p,\theta}\le C\|g\|_{p,\theta}+\frac{C}{\sqrt{\lambda}}\|f\|_{p,\theta},
\end{equation*}
where $C=C(d,\delta,p,\theta)$.
Moreover, for any $g,f\in \mathbb{L}_{p,\theta}(-\infty,T)$ such that $f\equiv 0$ if $\lambda=0$, there exists a unique solution $u\in \cH_{p,\theta}^{1,\lambda}(-\infty,T)$ to the equation \eqref{eq 3.18}.
\end{theorem}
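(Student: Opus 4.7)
The statement is explicitly presented as a special case of \cite[Theorem 3.9]{DK14}, which covers the wider class where $a^{ij}$ may depend on $(t,x_1)$ (with only $a^{11}$ restricted to depend on either $t$ or $x_1$ alone). My plan is therefore to prove it by direct citation: specialize the result of \cite{DK14} to the situation $a^{ij}=a^{ij}(t)$, which certainly satisfies the hypothesis on $a^{11}$, and read off the a priori estimate and solvability in $\cH_{p,\theta}^{1,\lambda}(-\infty,T)$.

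If one wanted a self-contained proof, the natural path is the Krylov-style mean oscillation scheme adapted to the weighted measure $\mu(dx\,dt)=x_1^{\theta-d}\,dx\,dt$. First, for $g=f=0$ one derives from Corollary \ref{cor 3.2}, applied after translation in $x'$ and $t$ and rescaling in $x_1$, a decay estimate of the form
\begin{equation*}
(|Du-(Du)_{Q_r^+(y_1)}|^q)_{Q_r^+(y_1)}\le C r^{\alpha q}\dashint_{Q_{4r}^+(y_1)}|D_1 u|^q\,\mu(dx\,dt)
\end{equation*}
for some $q<p$. Localizing via cutoffs and splitting the general solution into a homogeneous part and an inhomogeneous part (using Theorem \ref{theorem 3.8} itself at the level $p=q$, bootstrapped from the $p=2$ energy estimate on the half-space, to handle the inhomogeneous piece), one obtains a pointwise bound of the sharp function of $Du$ by the Hardy--Littlewood maximal functions of $g$ and $f/\sqrt{\lambda}$, plus a small multiple of a maximal function of $Du$ itself; this last term is absorbed once we apply the Fefferman--Stein theorem and the Hardy--Littlewood maximal function theorem in $L_{p,\theta}$. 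Hardy's inequality (Lemma \ref{hardy}) then converts the $\|Du\|_{p,\theta}$ bound into the $\|M^{-1}u\|_{p,\theta}$ bound, and testing against suitable duals yields the $\sqrt{\lambda}\|u\|_{p,\theta}$ bound. Finally, existence and uniqueness follow by the method of continuity, deforming $a^{ij}(t)$ to $\delta_{ij}$ through the family $s\,a^{ij}(t)+(1-s)\delta_{ij}$, for which the endpoint $s=0$ is the heat equation on a half-space with Dirichlet data, solvable by classical means.

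The main obstacle in such an independent proof is verifying that the weight $x_1^{\theta-d}$ belongs to the appropriate Muckenhoupt-type class relative to the cylinders $Q_r^+(y)$, uniformly as $y_1\downarrow 0$; this is exactly the content of the sharp range $\theta\in(d-1,d-1+p)$ flagged in the introduction (cf.\ \cite{Kry99}). Since this verification and the full perturbation scheme are carried out in \cite{DK14} in greater generality, the cleanest proposal is to simply invoke \cite[Theorem 3.9]{DK14}.
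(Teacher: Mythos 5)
Your proposal matches the paper's treatment exactly: the paper states this result as a special case of \cite[Theorem 3.9]{DK14} with $a^{ij}=a^{ij}(t)$ and offers no independent proof, which is precisely the direct-citation route you advocate. The additional sketch of a self-contained mean-oscillation argument is a reasonable outline of what a from-scratch proof would look like, but it is not required here and is not what the paper does.
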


Next we consider \eqref{eq1}-\eqref{bc1} with non-vanishing right-hand side.
\begin{lemma}\label{normal simple}
Let $p\in (1,\infty)$, $\theta\in (d-1,d-1+p)$, $\alpha\in (0,1)$, $r>0, \kappa \ge 32$, and $y_1\ge 0$. Assume that $f\in \mathbb{L}_{p,\theta}(Q_{\kappa r}^+(y_1))$ and $u\in C_{\text{loc}}^\infty(\overline{\bR^{d+1}_+})$ is a solution of
$$u_t-a_{ij}D_{ij}u=f$$
in $Q^+_{\kappa r}(y_1)$ with the Neumann boundary condition $D_1u=0$ on $\{x_1=0\}$. Then we have
\begin{align*}
&(|DD_1 u-(DD_1u)_{Q_r^+(y_1)}|^p)_{Q_r^+(y_1)}\\
&\le C\kappa^{-\alpha p}(|DD_1u|^p)_{Q_{\kappa r}^+(y_1)}
+C\kappa^{d+\theta+2}(|f|^p)_{Q_{\kappa r}^+(y_1)},
\end{align*}
where $C=C(d,\delta,p,\theta ,\alpha)$.
\end{lemma}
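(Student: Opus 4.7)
The plan is to differentiate the equation in $x_1$. Since $a_{ij}=a_{ij}(t)$, inside $Q_{\kappa r}^+(y_1)$ the function $D_1 u$ satisfies the \emph{divergence}-form parabolic equation
\begin{equation*}
(D_1 u)_t - D_i\bigl(a_{ij}(t) D_j(D_1 u)\bigr) = D_i g_i, \qquad g_1 = f,\ \ g_i = 0 \text{ for } i\ge 2,
\end{equation*}
and, crucially, the Neumann condition $D_1 u = 0$ on $\{x_1=0\}$ for $u$ is, for the new unknown $D_1 u$, exactly a \emph{Dirichlet} condition. This is precisely the setting in which Theorem \ref{theorem 3.8} is available, so a standard split of $D_1 u$ into an inhomogeneous piece and a homogeneous piece becomes feasible.

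Concretely, I would apply Theorem \ref{theorem 3.8} with $\lambda=0$ to produce a global auxiliary function $v\in\cH^1_{p,\theta}(-\infty,\infty)$ solving
\begin{equation*}
v_t - D_i\bigl(a_{ij}(t) D_j v\bigr) = D_i\bigl(\chi_{Q_{\kappa r}^+(y_1)} g_i\bigr) \quad \text{in}\ (-\infty,\infty)\times\bR^{d}_+,
\end{equation*}
with $v=0$ on $\{x_1=0\}$, enjoying the bound $\|Dv\|_{\bL_{p,\theta}} \le C\|f\|_{\bL_{p,\theta}(Q_{\kappa r}^+(y_1))}$. Setting $w:=D_1 u - v$, the difference satisfies the homogeneous equation $w_t - a_{ij}(t)D_{ij}w=0$ in $Q_{\kappa r}^+(y_1)$ with $w=0$ on $\{x_1=0\}$, since for coefficients depending only on $t$ the divergence and non-divergence forms coincide.

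The oscillation of $Dw$ is then controlled by Corollary \ref{cor 3.2} after the parabolic rescaling with factor $\lambda=\kappa r/4$, so that the outer cylinder $Q_{\kappa r}^+(y_1)$ becomes $\tilde Q_4^+(y_1/\lambda)$ and the inner cylinder $Q_r^+(y_1)$ becomes $\tilde Q_{4/\kappa}^+(y_1/\lambda)$. In the boundary-adjacent regime $y_1\le\lambda$, the rescaled center $y_1/\lambda$ lies in $[0,1]$ and $4/\kappa<1$ (as $\kappa\ge 32$), so Corollary \ref{cor 3.2} applies directly; in the interior regime $y_1>\lambda$, the second inequality in Lemma \ref{lemma 3.2} plays the analogous role, using that the weight $x_1^{\theta-d}$ is comparable to $y_1^{\theta-d}$ throughout $Q_{\kappa r}(y_1)$. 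Rescaling back and using the computation $\mu(Q_{\kappa r}^+(y_1))\le C(\kappa r)^{\theta+2}$, which matches the scaling exponent $\lambda^{\theta+2}$, I obtain
\begin{equation*}
\bigl(|Dw-(Dw)_{Q_r^+(y_1)}|^p\bigr)_{Q_r^+(y_1)} \le C\kappa^{-\alpha p}\bigl(|D_1 w|^p\bigr)_{Q_{\kappa r}^+(y_1)}.
\end{equation*}
For $Dv$ I would simply divide the global $L_{p,\theta}$-bound by $\mu(Q_r^+(y_1))$, obtaining
\begin{equation*}
\bigl(|Dv|^p\bigr)_{Q_r^+(y_1)} \le \frac{C\,\mu(Q_{\kappa r}^+(y_1))}{\mu(Q_r^+(y_1))}\bigl(|f|^p\bigr)_{Q_{\kappa r}^+(y_1)} \le C\kappa^{d+\theta+2}\bigl(|f|^p\bigr)_{Q_{\kappa r}^+(y_1)},
\end{equation*}
where the measure-ratio estimate is proved by a direct case split according to whether $y_1$ is smaller than $r$, between $r$ and $\kappa r$, or larger than $\kappa r$. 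Putting things together with $DD_1 u=Dw+Dv$, the triangle inequality for the oscillation, $|D_1 w|^p\le C(|DD_1 u|^p+|D_1 v|^p)$, and the analogous average-version bound $(|D_1 v|^p)_{Q_{\kappa r}^+(y_1)}\le C(|f|^p)_{Q_{\kappa r}^+(y_1)}$ (again from Theorem \ref{theorem 3.8} divided by $\mu(Q_{\kappa r}^+(y_1))$) delivers the claimed inequality, with the $\kappa^{-\alpha p}$ in front of the $f$-term absorbed into $\kappa^{d+\theta+2}$ since $\kappa\ge 32$.

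The main obstacle I anticipate is the geometric bookkeeping in applying Corollary \ref{cor 3.2}: one must split into the boundary and interior regimes based on the position of $y_1$ relative to $\kappa r$, rescale so that the outer cylinder ends up in the specific configuration of that corollary, and then verify that the various weighted-measure ratios collapse to the clean factor $\kappa^{-\alpha p}$ on the $w$-side while producing the (deliberately generous) exponent $d+\theta+2$ on the $f$-side. The latter exponent is not tight — it is just the easy maximum of the regime-by-regime ratios — but is exactly what is needed for the subsequent perturbation argument in the paper.
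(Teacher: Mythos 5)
Your proposal follows essentially the same route as the paper's proof: differentiate in $x_1$ so that $D_1 u$ satisfies the divergence-form equation with the Dirichlet boundary condition, use Theorem~\ref{theorem 3.8} to subtract off an inhomogeneous auxiliary solution controlled by $\|f\|_{\bL_{p,\theta}}$, apply Corollary~\ref{cor 3.2} (boundary regime) or Lemma~\ref{lemma 3.2} (interior regime) to the remaining homogeneous piece, and combine via measure-ratio bookkeeping. One imprecision worth flagging: in the interior regime the weight $x_1^{\theta-d}$ is comparable to $y_1^{\theta-d}$ only on a fixed small \emph{interior} cylinder around $y_1$ (the paper uses $Q_{1/4}(y_1)\subset Q_{1/2}(y_1)$ after normalizing $\kappa r=8$), not throughout $Q_{\kappa r}^+(y_1)$, which can still reach $\{x_1=0\}$ when $\kappa r/4< y_1<\kappa r$; the bound over the full $Q_{\kappa r}^+(y_1)$ then comes from an explicit measure-ratio estimate rather than from weight comparability on the whole cylinder.
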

\begin{proof}
Denote $v=D_1u$.  Then $v$ satisfies
\begin{equation*}
v_t-D_i(a_{ij}D_jv)=D_1 f%\label{eq 219.1}
\end{equation*}
in $Q^+_{\kappa r}(y_1)$ with the Dirichlet boundary condition $v=0$ on $\{x_1=0\}$.
By a scaling argument, it is sufficient to set $\kappa r=8$. We consider two cases.

{\em Case 1:} $y_1\in [0,1]$. Since $r=8/\kappa\le 1/4$, we have
\begin{equation*}
Q_r^+(y_1)\subset Q_2^+\subset Q_4^+\subset Q_{\kappa r}^+(y_1).
\end{equation*}
Let $\eta$ be a smooth function with support in $(-(\kappa r)^2, (\kappa r)^2) \times B_{\kappa r}(y_1,0)$ and $\eta=1$ in $Q_4$. By Theorem \ref{theorem 3.8}, there exists a unique solution $w\in \cH_{p,\theta}^{1,0}(-\infty,0) = \cH^1_{p,\theta}(-\infty,0)$ to the equation
\begin{equation*}
w_t-D_i(a_{ij}D_jw)=D_1(f\eta)
\end{equation*}
in $(-\infty,0)\times \bR_+^d$, satisfying
\begin{equation*}%\label{eq 220.1}
\|Dw\|_{p,\theta}\le C\|f\eta\|_{p,\theta}.
\end{equation*}
Due to the definition of $\eta$, this implies
\begin{equation}\label{eq 220.1}
\|Dw\|_{p,\theta}^p\le C \mu ( Q_{\kappa r}^+(y_1) ) \left( |f|^p \right)_{Q_{\kappa r}^+(y_1)}.
\end{equation}

By a standard mollification argument (see, for instance, \cite[Theorem 4.7]{HZ14}), we may assume that $w$ is smooth.
Let $\hat{u}=v-w$, which is also smooth, and satisfies  $\hat{u} = 0$ on $\{x_1 = 0 \}$ and
$$\hat{u}_t-D_i(a_{ij}D_j\hat{u})=0$$
in $Q_4^+$. By Corollary \ref{cor 3.2}, we have
\begin{equation}
(|D\hat{u}- (D\hat{u})_{Q^+_r(y_1)}|^p)_{Q^+_r(y_1)}\le Cr^{\alpha p}\int_{Q^+_4}|D_1 \hat{u}|^p\,\mu(dx\,dt).\label{eq 220.2}
\end{equation}
Combining \eqref{eq 220.1},  \eqref{eq 220.2}, and the triangle inequality, we reach
\begin{align*}
&(|DD_1 u-(DD_1u)_{Q_r^+(y_1)}|^p)_{Q_r^+(y_1)}\\
&\le C(|D\hat{ u}-(D\hat{u})_{Q_r^+(y_1)}|^p)_{Q_r^+(y_1)}+C(|Dw|^p)_{Q_r^+(y_1)}\\
&\le Cr^{\alpha p}(|D\hat{u}|^p)_{Q^+_4}+C\mu(Q_r^+(y_1))^{-1}\mu(Q_{\kappa r}^+(y_1))(|f|^p)_{Q_{\kappa r}^+(y_1)}\\
&\le Cr^{\alpha p}(|DD_1 u|^p)_{Q^+_4}+ C\mu(Q_r^+(y_1))^{-1}\mu(Q_{\kappa r}^+(y_1)) (|f|^p)_{Q_{\kappa r}^+(y_1)}.
\end{align*}
Bearing in mind that $\mu(Q_r^+(y_1))\ge Cr^{\sigma+2}$, where $\sigma=\max\{d,\theta\}$ and $r=8/\kappa$, we prove the lemma for Case 1.

{\em Case 2:} $y_1>1$. This is essentially an interior case.  Since $r=8/\kappa\le 1/4$, we have
$$
Q_r^+(y_1)=Q_r(y_1)\subset Q_{1/4}(y_1)\subset Q_{1/2}(y_1)\subset Q_{\kappa r}^+(y_1).
$$
As in Case 1, we take a smooth function $\eta$  with support in $(-(\kappa r)^2, (\kappa r)^2) \times B_{\kappa r}(y_1,0)$ and $\eta=1$ on $Q_{1/2}(y_1)$. Then by Theorem \ref{theorem 3.8}, there exists a unique solution  $w\in\cH^1_{p,\theta}(-\infty,0)$ of the  equation
$$w_t-D_i(a_{ij}D_jw)=D_1(f\eta)
$$
in $(-\infty,0)\times \bR_+^d$, satisfying
$$
\|Dw\|_{p,\theta}\le C\|f\eta\|_{p,\theta}.
$$
Then we get
\begin{equation}
							\label{eq0703_1}
\|Dw\|_{p,\theta}^p \le C \mu ( Q_{\kappa r}^+(y_1) ) \left( |f|^p \right)_{Q_{\kappa r}^+(y_1)}.
\end{equation}

For the same reason as before, we may assume that $w$ is smooth and so is $\hat{u}=D_1u-w$.
It is easily seen that
\begin{equation*}
%							\label{eq0715_1}
\hat{u}_t-a_{ij}D_{ij}\hat{u}=0\quad \text{in}\quad Q_{1/2}(y_1).
\end{equation*}
By Lemma \ref{lemma 3.2}, we have
\begin{equation*}%\label{eq 4.072}
\|D\hat{u}\|_{C^{\alpha/2,\alpha}(Q_{1/4}(y_1))}\le C\|D\hat{u}\|_{L_p(Q_{1/2}(y_1))}.
\end{equation*}
From this and \eqref{eq0703_1},
\begin{align}\nonumber
&(|DD_1 u-(DD_1u)_{Q_r(y_1)}|^p)_{Q_r(y_1)}\\\nonumber
&\le C(|D\hat{u}-(D\hat{u})_{Q_r(y_1)}|^p)_{Q_r(y_1)}+C(|Dw|^p)_{Q_r(y_1)}\\\nonumber
&\le Cr^{\alpha p}\|D\hat{u}\|^p_{C^{\alpha/2,\alpha}(Q_{1/4}(y_1))}+C\mu(Q_r(y_1))^{-1}\mu(Q_{\kappa r}^+(y_1))(|f|^p)_{Q_{\kappa r}^+(y_1)}\\
&\le Cr^{\alpha p}\|D\hat{u}\|^p_{L_p(Q_{1/2}(y_1))}+C\mu(Q_r(y_1))^{-1}\mu(Q_{\kappa r}^+(y_1))(|f|^p)_{Q_{\kappa r}^+(y_1)}.\label{eq 220.3}
\end{align}
Since for any $x_1\in (y_1-1/2, y_1+1/2)$,
$$
C_1\le \frac{x_1^{\theta-d}}{\mu(Q_{1/2}(y_1))}\le C_2,
$$
where $C_{1,2}=C_{1,2}(d,\theta)$, by \eqref{eq 220.3}, \eqref{eq0703_1}, and the triangle inequality, it holds that
\begin{align*}
&(|DD_1 u-(DD_1u)_{Q_{r}(y_1)}|^p)_{Q_r(y_1)}\\
&\le Cr^{\alpha p} (|DD_1 u|^p)_{Q_{1/2}(y_1)}
+ C \mu(Q_r(y_1))^{-1}\mu(Q_{\kappa r}^+(y_1)) (|f|^p)_{Q_{\kappa r}^+(y_1)}.
\end{align*}
Taking into account that $\mu(Q_r(y_1))\ge Nr^{d+2}\mu(Q_8^+(y_1))$, we prove the lemma for the second case.
\end{proof}

%\iffalse\begin{lemma}
%Assume $u\in C_{\text{loc}}^\infty(\Omega)$ and satisfies \eqref{eq1} and \eqref{bc1}. Let $0<r\le a$, $f\in L_{p,\theta}(\Omega)$, $\kappa\ge 2$ and $r\kappa\ge 4a$. Then for any $t_0\in\bR$ and $x_0^\prime\in \bR^{d-1}$, we have
%\begin{align*}
%&\dashint_{Q_r(t_0,a,x_0^\prime)}|DD_1u(t,x)-(DD_1u)_{Q_r(t_0,a,x_0^\prime)}|^p \mu(dx\,dt)\\
%\le& N(\kappa r/a)^{-p}\dashint_{Q_{\kappa r}(t_0,a,x_0^\prime)\cap \Omega}|DD_1u(t,x)|^p\mu(dx\,dt)\\
%+&N \frac{\kappa^{d+1}(\kappa r/a)^{\theta-d+1}}{r/a}\dashint_{Q_{\kappa r}(t_0,a,x_0^\prime)\cap \Omega}|f(t,x)|^p\mu(dx\,dt),
%\end{align*}
%where $N=N(d, p,\theta,\delta,K)$.
%\end{lemma}
%\begin{proof}
%Let $v=D_1u$ and $v=0$ on $\partial \Omega$. It is easily seen that $v\in MH_{p,\theta}^1$. Moreover, $v$ satisfies all the conditions in Theorem \ref{KKL}. So the conclusion follows by replacing $u$ with $v$ in Theorem \ref{KKL}.
%\end{proof}
%\fi

It remains to estimate $D_{ij} u$ with $i,j>1$.
Let us first state a  Poincar\'e inequality in weighted $L_p$ spaces.

\begin{lemma}\label{lemma 4.1}
Let $a\in \overline{\bR^+}$, $\alpha\in (-1,\infty)$, $r\in (0,\infty)$, $p\in [1,\infty)$, $\nu(dx)=(x_1)^\alpha\, dx$, $x\in \bR^{d}_+$, and $u\in C^\infty_{\text{loc}}(\overline{\bR^d_+})$. Then
\begin{equation}
							\label{eq0703_2}
\begin{aligned}
&\int_{B^+_r(a)}\int_{B^+_r(a)}|u(x)-u(y)|^p\,\nu(dx)\,\nu(dy)\\
&\le C r^p \nu(B^+_r(a))\int_{B^+_r(a)}|Du|^p\,\nu(dx),
\end{aligned}
\end{equation}
where $C = C(d,p,\alpha)$.
\end{lemma}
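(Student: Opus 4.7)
The plan is to reduce the double-integral inequality \eqref{eq0703_2} to a single-integral weighted Poincar\'e inequality, and then exploit the product structure of both the domain $B_r^+(a) = I \times B_r'(0)$, where $I := ((a-r)^+, a+r) \subset [0,\infty)$, and the measure $\nu = x_1^\alpha\,dx_1 \otimes dx'$.

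First, set $u_B := \dashint_{B_r^+(a)} u\,\nu(dx)$. The elementary bound $|u(x) - u(y)|^p \leq 2^{p-1}(|u(x) - u_B|^p + |u(y) - u_B|^p)$ reduces \eqref{eq0703_2} to the single-integral Poincar\'e inequality
\[
  \int_{B_r^+(a)} |u - u_B|^p\,\nu(dx) \leq C r^p \int_{B_r^+(a)} |Du|^p\,\nu(dx).
\]
To prove this, I introduce the partial average $\tilde u(x_1) := \dashint_{B_r'(0)} u(x_1, y')\,dy'$ and split $u - u_B = (u - \tilde u) + (\tilde u - u_B)$. The tangential piece is handled slice-by-slice by the standard (unweighted) Poincar\'e inequality on $B_r'(0) \subset \bR^{d-1}$, integrated against $x_1^\alpha\,dx_1$ on $I$. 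For the normal piece, Fubini shows that $u_B$ equals the weighted average $\bar v := w(I)^{-1}\int_I \tilde u(x_1) x_1^\alpha\,dx_1$, where $w(I) := \int_I x_1^\alpha\,dx_1$; together with the Jensen bound $|\tilde u'(x_1)|^p \leq \dashint_{B_r'(0)} |D_1 u(x_1, y')|^p\,dy'$, this reduces the task to the one-dimensional weighted Poincar\'e inequality
\[
  \int_I |\tilde u - \bar v|^p x_1^\alpha\,dx_1 \leq Cr^p \int_I |\tilde u'(x_1)|^p x_1^\alpha\,dx_1.
\]

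To establish this 1D inequality, I would apply Jensen's inequality with the probability measure $w(I)^{-1} y_1^\alpha\,dy_1$, use the H\"older bound $|\tilde u(x_1) - \tilde u(y_1)|^p \leq (2r)^{p-1}\int_{x_1 \wedge y_1}^{x_1 \vee y_1}|\tilde u'(t)|^p\,dt$, and swap the order of integration by Fubini to arrive at
\[
  \int_I |\tilde u - \bar v|^p x_1^\alpha\,dx_1 \leq \frac{2(2r)^{p-1}}{w(I)}\int_L^R |\tilde u'(t)|^p\,W_L(t)\,W_R(t)\,dt,
\]
with $L = (a-r)^+$, $R = a+r$, $W_L(t) := \int_L^t s^\alpha\,ds$, and $W_R(t) := \int_t^R s^\alpha\,ds$. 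The only real content of the whole proof is then the one-variable estimate $W_L(t)\,W_R(t) \leq t^\alpha (R-L)\, w(I)$ for every $t \in I$. This follows by a short case analysis on the sign of $\alpha$: if $\alpha \geq 0$, then $s^\alpha \leq t^\alpha$ on $(L,t)$ gives $W_L(t) \leq t^\alpha(R-L)$ while trivially $W_R(t) \leq w(I)$; if $-1 < \alpha < 0$, then $s^\alpha \leq t^\alpha$ on $(t,R)$ gives $W_R(t) \leq t^\alpha(R-L)$ and $W_L(t) \leq w(I)$. Plugging this back and using $R - L \leq 2r$ yields the 1D Poincar\'e with an absolute constant $2^{p+1}$.

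I do not expect a serious obstacle. The only place the hypothesis $\alpha > -1$ is genuinely used is the finiteness of $w(I)$, i.e., the integrability of $x_1^\alpha$ near the boundary $x_1=0$ when $L=0$; everything else reduces to calculus and the classical unweighted Poincar\'e inequality on a ball in $\bR^{d-1}$. The final constant $C$ therefore depends only on $d$ and $p$, with no further $\alpha$-dependence.
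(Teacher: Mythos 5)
Your argument is correct, and it takes a genuinely different route from the paper's. The paper reduces to $d=1$, scales to $r=1$, splits into the cases $a\ge 2$ (where the weight is comparable to Lebesgue measure) and $a\in(0,2)$, and in the latter bounds $|u(x)-u(y)|^p$ directly by a weighted H\"older inequality with exponent $-\alpha q/p$, absorbing the resulting powers $x^{1-\alpha q/p}, y^{1-\alpha q/p}$ into the constant since $x,y$ stay bounded. This produces a constant depending on $\alpha$ and tacitly sidesteps the degenerate exponent $\alpha q/p=1$. You instead pass to the oscillation from the weighted mean $u_B$, split $u-u_B$ into a tangential part (handled slice-by-slice by the classical unweighted Poincar\'e inequality on $B_r'(0)$) and a normal part, reducing the latter to a one-dimensional weighted Poincar\'e inequality for $\tilde u$. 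The Jensen--Fubini reduction and the pointwise bound $W_L(t)W_R(t)\le t^\alpha(R-L)\,w(I)$---verified by a two-line sign analysis on $\alpha$---give a clean, scale-invariant proof with no case distinction on $a$, no scaling normalization, and a constant $2^{p+1}$ (times the unweighted Poincar\'e constant) that is independent of $\alpha$. In that last respect your result is slightly sharper than what the paper's proof delivers, though both of course satisfy the stated $C=C(d,p,\alpha)$. The one point worth stating explicitly in a written-up version is the identity $u_B=\bar v$, which you correctly obtain from Fubini and the product structure $\nu(B_r^+(a))=|B_r'|\,w(I)$.
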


\begin{proof}
When $\alpha \in [0,\infty)$, the inequality is proved in \cite[Lemma 4.1]{KimLee13} with a missing constant depending on $d$.
For the sake of completeness, we here present a proof when $\alpha\in (-1,\infty)$.
Since the weight is with respect to $x_1$, we only prove \eqref{eq0703_2} for $d = 1$. In fact, to prove \eqref{eq0703_2} for $d>1$ we just need to combine the case when $d=1$ with the unweighted Poincar\'{e} inequality.
%Since the weight is with respect to $x_1$, we only show the proof when $d = 1$.

Due to scaling, it suffices to prove \eqref{eq0703_2} with $r=1$. We further assume $a\in (0,2)$. Indeed, if $a\ge 2$, the inequality \eqref{eq0703_2} is equivalent to the usual Poincar\'e inequality without weights.
For each $x, y \in ((a-1)^+,a+1)$,
by H\"{o}lder's inequality,
\begin{align*}
&|u(x)-u(y)|^p = \Big|\int_{\min\{x,y\}}^{\max\{x,y\}}u'(z)\,dz\Big|^p\\
&\le C \left(y^{1-\alpha q/p}+ x^{1-\alpha q/p}\right)^{p/q} \int_{\min\{x,y\}}^{\max\{x,y\}} |u'(z)|^p z^\alpha \, dz\\
&\le C \left(y^{1-\alpha q/p}+ x^{1-\alpha q/p}\right)^{p/q} \int_{(a-1)^+}^{a+1} |u'(z)|^p z^\alpha \, dz,
\end{align*}
where $1/p+1/q=1$ and $C = C(p,\alpha)$.
Then, to conclude \eqref{eq0703_2}, we integrate the above inequalities with respect to $\nu(dx)$ and $\nu(dy)$, and use the fact that $a \in (0,2)$.
\end{proof}

In the sequel, we denote the standard parabolic cylinder in $\bR^{d+1}$ as
$$\hat{Q}^{d+1}_r(X)=(t-r^2,t)\times \hat{B}_r(x),$$ where $\hat{B}_r(x)$ is the Euclidean ball in $\bR^{d}$ with radius $r$ and center $x$.
For a function $f$ on $\bR^{d+1}$, we define the average of $f$ in $\hat{Q}^{d+1}_r(X)$ without weight as
$$
\langle f \rangle_{\hat{Q}^{d+1}_r(X)}
=\frac{1}{|\hat{Q}^{d+1}_r(X)|}\int_{\hat{Q}^{d+1}_r(X)}f(t,x)\,dx\,dt.
$$

\begin{theorem}\label{kry}
Let $p\in (1,\infty)$ and $u\in C_{\text{loc}}^\infty(\bR^{d+1})$ satisfy
\begin{equation*}
- u_t + a_{ij}(t)D_{ij}u=f\quad \text{in}\quad \bR^{d+1}.
\end{equation*}
Then there exists a constant $C=C(d,\delta,p)$ such that for any $\kappa\ge 4, r>0$, we have
$$
\left\langle |D^2 u- \langle D^2 u \rangle_{\hat{Q}^{d+1}_r}|^p \right\rangle_{\hat{Q}^{d+1}_r}\le C\kappa^{-p}\langle |D^2 u|^p \rangle_{\hat{Q}^{d+1}_{\kappa r}}+C \kappa^{d+2}\langle |f|^p \rangle_{\hat{Q}^{d+1}_{\kappa r}}.
$$
\end{theorem}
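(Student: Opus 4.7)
The plan is to follow the standard decomposition used by Krylov in \cite{Kry97} for parabolic equations with coefficients measurable in $t$: split $u = v + w$, where $w$ absorbs the inhomogeneous term $f$ via whole-space $L_p$ solvability, and $v = u - w$ solves the homogeneous equation on $\hat{Q}^{d+1}_{\kappa r}$ and is therefore amenable to interior smoothing.

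\emph{The inhomogeneous piece.} Using Krylov's $L_p$-solvability for parabolic equations with coefficients depending only on $t$, I would let $w \in W^{1,2}_p(\bR^{d+1})$ solve $-w_t + a_{ij}(t) D_{ij} w = f\,\chi_{\hat{Q}^{d+1}_{\kappa r}}$ on $\bR^{d+1}$, with $\|D^2 w\|_{L_p(\bR^{d+1})} \le C \|f\|_{L_p(\hat{Q}^{d+1}_{\kappa r})}$. Since $|\hat{Q}^{d+1}_{\kappa r}|/|\hat{Q}^{d+1}_r| = \kappa^{d+2}$, this gives at once $\langle |D^2 w|^p \rangle_{\hat{Q}^{d+1}_r} \le C\kappa^{d+2} \langle |f|^p\rangle_{\hat{Q}^{d+1}_{\kappa r}}$, matching the second term of the claim.

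\emph{The homogeneous piece and the main obstacle.} The crucial observation for $v$ is that since $a_{ij}$ depends only on $t$, every spatial derivative $D_x^\beta v$ again satisfies the homogeneous equation on $\hat{Q}^{d+1}_{\kappa r}$, so iterating the local $L_p \to L_\infty$ estimate yields, for $\kappa \ge 4$ and $k \in \{3,4\}$,
\[
\|D^k v\|_{L_\infty(\hat{Q}^{d+1}_{\kappa r/2})} \le C(\kappa r)^{-(k-2)}\, \langle |D^2 v|^p\rangle^{1/p}_{\hat{Q}^{d+1}_{\kappa r}}.
\]
Converting these pointwise bounds into a mean-oscillation estimate of $D^2 v$ on $\hat{Q}^{d+1}_r$ is the delicate step. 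Spatially, a first-order Taylor bound gives $|D^2 v(t,x) - D^2 v(t,y)| \le |x-y|\,\|D^3 v\|_\infty$, which is routine. The genuine difficulty is the $t$-direction, since $a_{ij}(t)$ is only measurable and cannot be differentiated in $t$. The resolution is to use the equation itself: $(D^2 v)_t = a_{ij}(t) D^2 D_{ij} v$, so $|(D^2 v)_t| \le C \|D^4 v\|_\infty$ pointwise, whence $|D^2 v(t,y) - D^2 v(s,y)| \le C |t-s|\,\|D^4 v\|_\infty$ on $\hat{Q}^{d+1}_r$. Combining both directions and invoking the interior bounds above,
\[
\langle |D^2 v - \langle D^2 v\rangle_{\hat{Q}^{d+1}_r}|^p\rangle^{1/p}_{\hat{Q}^{d+1}_r} \le C\bigl(r \|D^3 v\|_\infty + r^2 \|D^4 v\|_\infty\bigr) \le C\kappa^{-1}\, \langle |D^2 v|^p\rangle^{1/p}_{\hat{Q}^{d+1}_{\kappa r}}.
\]

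\emph{Assembly.} Finally, the triangle inequality for $D^2 u = D^2 v + D^2 w$, together with the trivial bound of the oscillation of $D^2 w$ on $\hat{Q}^{d+1}_r$ by $C\langle |D^2 w|^p\rangle_{\hat{Q}^{d+1}_r}$, controls the total oscillation by $C\kappa^{-p}\langle |D^2 v|^p\rangle_{\hat{Q}^{d+1}_{\kappa r}} + C\kappa^{d+2}\langle |f|^p\rangle_{\hat{Q}^{d+1}_{\kappa r}}$. Using $|D^2 v|^p \le C(|D^2 u|^p + |D^2 w|^p)$ on $\hat{Q}^{d+1}_{\kappa r}$ and absorbing $\langle |D^2 w|^p\rangle_{\hat{Q}^{d+1}_{\kappa r}} \le C\langle |f|^p\rangle_{\hat{Q}^{d+1}_{\kappa r}}$ into the $f$-term yields the stated inequality.
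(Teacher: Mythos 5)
The paper proves Theorem \ref{kry} only by citing \cite[Theorem 5.1]{Kry07JFA}; your proposal supplies the omitted argument and it is exactly the standard Krylov decomposition used there: solve for the inhomogeneous part $w$ via whole-space $L_p$-solvability with $\|D^2 w\|_p \le C\|f\|_p$, then exploit that $D^\beta_x v$ solves the same homogeneous equation (coefficients spatially constant) to get interior pointwise bounds on $D^3v$, $D^4v$, substitute $(D^2v)_t = a_{ij}D_{ij}D^2 v$ to control the time increment, and combine by the triangle inequality. The scaling exponents, the use of $\kappa\ge 4$ to ensure $\hat{Q}^{d+1}_r\subset\hat{Q}^{d+1}_{\kappa r/2}$, and the absorption of $\langle|D^2w|^p\rangle_{\hat{Q}^{d+1}_{\kappa r}}$ into the $f$-term are all correct, so this is a faithful reconstruction rather than a different route.
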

\begin{proof}
See \cite[Theorem 5.1]{Kry07JFA}. %and Lemma 4.1 \cite{DK14}.
\end{proof}

To estimate $D_{x^\prime}^2u$, we introduce the following notation.
For $\mathfrak{y}_1 \ge 0$, set $B^{1+}_r(\mathfrak{y}_1)=((\mathfrak{y}_1-r)^+,\mathfrak{y}_1+r)\subset \bR$, $\mu_1(dx_1)=x_1^{\theta-d}\,dx_1$, and
$$
\mu_1(B_r^{1+}(\mathfrak{y}_1))
=\int_{(\mathfrak{y}_1-r)^+}^{\mathfrak{y}_1+r}x_1^{\theta-d}\,dx_1.
$$
\begin{lemma}\label{key lemma}
Let $p\in(1,\infty), \theta\in (d-1,d-1+p), r>0, \kappa\ge 32$, and $\mathfrak{y}_1\ge 0$. Assume that $f\in \bL_{p,\theta}(Q_{\kappa r}^+(\mathfrak{y}_1))$ and  $u\in C_{\text{loc}}^\infty(\overline{\bR^{d+1}_+})$ is a solution of
\begin{equation}
-u_t+a_{ij}D_{ij}u=f \quad\label{eq 5.222}
\end{equation}
in $Q_{\kappa r}^+(\mathfrak{y}_1)$ with $D_1 u=0$ on $\{x_1=0\}$. Then there exist constants $C=C(d,\delta,p, \theta)$ and $\alpha=\alpha(d, p,\theta)>0$ such that
\begin{align*}\nonumber
%\lambda^p(|u-(u)_{Q_r^+(\mathfrak{y}_1)}|^p)_{Q_r^+(\mathfrak{y}_1)}+C\kappa^{-2p}\lambda^{p}(|x_1^{-1}D_1 u|^p)_{Q_r^+(\mathfrak{y}_1)}++\lambda^{p}(|u|^p)_{Q_{\kappa r}^+(\mathfrak{y}_1)}
&(|D_{x^\prime}^2 u-(D_{x^\prime}^2 u)_{Q_r^+(\mathfrak{y}_1)}|^p)_{Q_r^+(\mathfrak{y}_1)}\\\nonumber
&\le  C \kappa^{-\alpha}(|D_{x^\prime}^2 u|^p)_{Q^+_{\kappa r}(\mathfrak{y}_1)} +C\kappa^{d+\theta+2}\big((|f|^p)_{Q^+_{\kappa r}(\mathfrak{y}_1)}+(|DD_1 u|^p)_{Q^+_{\kappa r}(\mathfrak{y}_1)}\big).%\label{eq 4.145}
\end{align*}
\end{lemma}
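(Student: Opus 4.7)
The plan is to decompose the weighted mean oscillation of each $D_{ij}u$ with $i,j\ge 2$ on $Q_r^+(\mathfrak{y}_1)$ into (a) variation in $x_1$ after averaging in $(t,x')$, and (b) variation in $(t,x')$ at each fixed $x_1$. Since the weight $x_1^{\theta-d}$ depends only on $x_1$, Fubini gives $\dashint_{Q_r^+(\mathfrak{y}_1)} h\,\mu = \dashint_{B_r^{1+}(\mathfrak{y}_1)} \langle h(\cdot,x_1,\cdot)\rangle_{(-r^2,0)\times B_r'(0)}\,\mu_1(dx_1)$, where $\langle\cdot\rangle$ is the unweighted $(t,x')$-average. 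I fix a smooth cutoff $\chi\in C_c^\infty(B_r'(0))$ with $\chi\equiv 1$ on $B_{r/2}'(0)$ and $|D_{x'}^k\chi|\le Cr^{-k}$, and define
\[
\hat F_{ij}(x_1) := \Big(r^2\!\int\!\chi^2\,dx'\Big)^{-1}\!\!\int_{-r^2}^0\!\!\int_{B_r'(0)} D_{ij}u(t,x_1,x')\,\chi^2(x')\,dx'\,dt.
\]
Taking the subtracted constant to be $\langle\hat F_{ij}\rangle_{\mu_1}$ and applying the triangle inequality, the task reduces to bounding
\[
\mathcal{I}_1 = \dashint_{Q_r^+}|D_{ij}u - \hat F_{ij}(x_1)|^p\mu,\qquad \mathcal{I}_2 = \dashint_{B_r^{1+}}|\hat F_{ij}(x_1) - \langle\hat F_{ij}\rangle_{\mu_1}|^p\mu_1.
\]

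For $\mathcal{I}_2$, the key observation is that since $\chi^2$ has compact support in $B_r'(0)$, integrating by parts in the variable $x_i$ inside $\hat F_{ij}$ and then differentiating in $x_1$ yields
\[
D_1\hat F_{ij}(x_1) = -\frac{2}{r^2\!\int\!\chi^2\,dx'}\int_{-r^2}^0\!\!\int_{B_r'(0)} D_jD_1u(t,x_1,x')\,\chi(x')\,D_i\chi(x')\,dx'\,dt,
\]
so $|D_1\hat F_{ij}(x_1)|\le Cr^{-1}\dashint_{(-r^2,0)\times B_r'(0)}|DD_1u|\,dx'\,dt$. Applying Lemma \ref{lemma 4.1} in one dimension with $\alpha = \theta-d \in (-1,p-1)$ on $B_r^{1+}(\mathfrak{y}_1)$ (here $\alpha>-1$ is exactly the lower endpoint $\theta>d-1$) followed by Jensen's inequality yields $\mathcal{I}_2\le Cr^p\dashint_{B_r^{1+}}|D_1\hat F_{ij}|^p\mu_1\le C\dashint_{Q_r^+(\mathfrak{y}_1)}|DD_1u|^p\mu$, and the volume comparison $\mu(Q_{\kappa r}^+)/\mu(Q_r^+)\le C\kappa^{d+\theta+2}$ enlarges the average to $Q_{\kappa r}^+(\mathfrak{y}_1)$.

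For $\mathcal{I}_1$, since $\hat F_{ij}(x_1)$ is comparable at each $x_1$ to the unweighted $(t,x')$-mean of $D_{ij}u(\cdot,x_1,\cdot)$ on $\hat Q^d_r$, the inner integrand in $\mathcal{I}_1 = \dashint_{B_r^{1+}}\langle|D_{ij}u-\hat F_{ij}(x_1)|^p\rangle\,\mu_1$ is controlled by the unweighted mean oscillation of $D_{x'}^2u(\cdot,x_1,\cdot)$ on $\hat Q^d_r$. Rewriting the equation as
\[
-u_t + \sum_{i,j\ge 2}a_{ij}(t)D_{ij}u = \tilde f,\quad \tilde f := f - \sum_{j\ge 2}(a_{1j}+a_{j1})D_{1j}u - a_{11}D_{11}u,
\]
I view $u(\cdot,x_1,\cdot)$ as a solution of a non-divergence parabolic equation in $\bR\times\bR^{d-1}$ and apply Theorem \ref{kry} in total dimension $d$ (after a standard cutoff localization $w=\eta u$ with $\eta(t,x')$ supported in $\hat Q^d_{\kappa r}$) to get, at each $x_1$,
\[
\langle|D_{x'}^2u - \langle D_{x'}^2u\rangle_{\hat Q^d_r}|^p\rangle_{\hat Q^d_r} \le C\kappa^{-p}\langle|D_{x'}^2u|^p\rangle_{\hat Q^d_{\kappa r}} + C\kappa^{d+1}\langle|\tilde f|^p\rangle_{\hat Q^d_{\kappa r}}.
\]
Integrating against $\mu_1$ over $B_r^{1+}(\mathfrak{y}_1)$, using the measure ratio $\mu_1(B_{\kappa r}^{1+})/\mu_1(B_r^{1+}) \le C\kappa^{\theta-d+1}$, and observing $|\tilde f|^p \le C(|f|^p + |DD_1u|^p)$ (since $D_{11}u$ is a component of $DD_1u$), I obtain $\mathcal{I}_1 \le C\kappa^{-\alpha}(|D_{x'}^2u|^p)_{Q_{\kappa r}^+} + C\kappa^{d+\theta+2}\big((|f|^p)_{Q_{\kappa r}^+} + (|DD_1u|^p)_{Q_{\kappa r}^+}\big)$ with $\alpha = p - (\theta-d+1) > 0$; the positivity of $\alpha$ is precisely the sharp upper endpoint condition $\theta < d-1+p$ from \cite{Kry99}.

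The main obstacle is the cutoff localization needed to apply Theorem \ref{kry}, which as stated requires a global solution on $\bR^{d+1}$. Setting $w=\eta u$ introduces error forcing $\eta_t u + 2\sum a_{ij}D_i\eta\cdot D_j u + u\sum a_{ij}D_{ij}\eta$, contributing $(\kappa r)^{-p}|Du|^p$ and $(\kappa r)^{-2p}|u|^p$ into $\langle|\tilde f|^p\rangle_{\hat Q^d_{\kappa r}}$ that must be absorbed without disturbing the $\kappa^{d+\theta+2}$ bookkeeping; this is typically done by an interpolation against $|D_{x'}^2 u|^p$ aided by the weighted Hardy inequality (Lemma \ref{hardy}) near $\{x_1=0\}$, and tracking the $\kappa$-powers through this absorption is the main technical burden of the argument.
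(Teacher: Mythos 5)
Your decomposition and toolkit---a cutoff-averaged $\hat F_{ij}$ in place of the paper's unit-integral kernel $\eta$, integration by parts plus Lemma~\ref{lemma 4.1} with $d=1$ for the $x_1$-variation, and Theorem~\ref{kry} on $\bR\times\bR^{d-1}$ for the $(t,x')$-variation after moving $a_{11}D_{11}u$ and $(a_{1j}+a_{j1})D_{1j}u$ to the right-hand side---is exactly the paper's strategy. The problem is that the step you flag as ``the main obstacle'' is where the proposal stops, and the cutoff device that creates the obstacle is not needed. The paper applies Theorem~\ref{kry} directly to $u(\cdot,x_1,\cdot)$ on $\hat Q^d_{\kappa r}$ at each fixed $x_1$: the cited result \cite[Theorem 5.1]{Kry07JFA} is a \emph{local} interior oscillation estimate that uses the equation only on $\hat Q^d_{\kappa r}$, which after scaling ($\kappa r=8$) lies inside the slice of $Q^+_{\kappa r}(\mathfrak{y}_1)$ where the equation holds, so no multiplicative cutoff $w=\eta u$ is required. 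Introducing one creates, as you note, $(\kappa r)^{-p}|Du|^p$ and $(\kappa r)^{-2p}|u|^p$ error terms that do not appear in the target estimate; Hardy's inequality (Lemma~\ref{hardy}) controls $M^{-1}D_1u$ by $D_{11}u$ because $D_1u$ vanishes on the boundary, but gives no handle on $D_{x'}u$ or $u$ themselves, so the ``interpolation aided by Hardy'' you gesture at does not obviously close, and in any case you leave it undone. As written the argument is incomplete at precisely the step the lemma is about.

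Two smaller issues. The ratio $\mu_1(B^{1+}_{\kappa r})/\mu_1(B^{1+}_r)\le C\kappa^{\theta-d+1}$ is false when $\theta<d$ and $\mathfrak{y}_1$ is bounded away from zero: there the one-dimensional weight is comparable to a constant on $B^{1+}_r(\mathfrak{y}_1)$ and the ratio is of order $\kappa$, which exceeds $\kappa^{\theta-d+1}$. The correct exponent is $\max\{1,\theta-d+1\}$, yielding $\alpha=p+d-\max\{d,\theta\}-1$, which is positive automatically for $\theta<d$ and encodes $\theta<d-1+p$ only when $\theta\ge d$. Finally, your computation implicitly assumes the boundary regime $\mathfrak{y}_1\in[0,1]$ after scaling; the interior case $\mathfrak{y}_1>1$ (Case (ii) in the paper), where one instead treats each $D_ku$, $k=1,\dots,d$, as a solution of the associated divergence-form equation and repeats the interior argument from Lemma~\ref{normal simple}, is not addressed.
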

\begin{proof}
%By translation of the coordinates, without loss of generality, we can assume $(t,x^\prime)=(0,0)$.
By scaling, we may assume that $\kappa r=8$. In this case $r\le 1/4$ because $\kappa \ge 32$.
Let $\eta=\eta(t,x')\in C_0^\infty(\bR\times \bR^{d-1})$ with a unit integral such that $\text{supp}(\eta)\subset Q^{d}_r$ and
\begin{equation}
							\label{eq0708_1}
|D_{x'} \eta | \le C(d) \, r^{-d-2}
\quad
\text{on}
\,\,\, Q_r^d,
\end{equation}
where
$$
Q_r^{d}:={(-r^2,0)}\times B_r^{d-1}:={(-r^2,0)}\times\{|x^\prime|\le r\}.
$$
We consider two cases.

{\em Case (i):} $\mathfrak{y}_1\in [0,1]$.  By H\"older's inequality and the triangle inequality, it is easily seen that
\begin{align}\nonumber
&\dashint_{Q^+_r(\mathfrak{y}_1)}|D^2_{x^\prime}u-(D^2_{x^\prime}u)_{Q^+_r(\mathfrak{y}_1)}|^p
\,\mu(dx\,dt)\\
&\le C\dashint_{Q^+_r(\mathfrak{y}_1)}\dashint_{Q^+_r(\mathfrak{y}_1)}|D_{x^\prime}^2u(t,x)-D_{x^\prime}^2u(s,y)|^p
\,\mu(dx\,dt)\,\mu(dy\,ds).\label{eq 4.201}
\end{align}
Since, by the triangle inequality,
\begin{align*}
&|D_{x^\prime}^2u(t,x)-D_{x^\prime}^2u(s,y)|^p\\
&\le C\Big|D_{x^\prime}^2u(t,x)-\int_{Q_r^d}\eta(\sigma,z^\prime)D_{x^\prime}^2u(\sigma, x_1, z^\prime)\,dz^\prime\,d\sigma\Big|^p\\
&\quad +C\Big|D_{x^\prime}^2u(s,y)-\int_{Q_r^d}\eta(\sigma,z^\prime)D_{x^\prime}^2u(\sigma, y_1, z^\prime)\,dz^\prime\,d\sigma\Big|^p\\
&\quad +C\Big|\int_{Q_r^d}\eta(\sigma, z^\prime) (D_{x^\prime}^2u(\sigma, x_1,z^\prime)-D_{x^\prime}^2u(\sigma, y_1,z^\prime))\,dz^\prime\,d\sigma\Big|^p,
\end{align*}
the right-hand side of \eqref{eq 4.201} is bounded by $C(I+II)$, where
\begin{align*}
I:&=\dashint_{Q^+_r(\mathfrak{y}_1)}\Big|D^2_{x^\prime}u(t,x_1,x^\prime)-\int_{Q^{d}_r}\eta(s,y^\prime)D^2_{x^\prime}u(s,x_1,y^\prime)\,dy^\prime\,ds\Big|^p
\,\mu(dx\,dt),\\
II:&=\dashint_{B_r^{1+}(\mathfrak{y}_1)}\dashint_{B^{1+}_r(\mathfrak{y}_1)}\Big|\int_{Q_r^d}(D^2_{x^\prime}u(t,x_1, z^\prime)-D^2_{x^\prime}u(t, y_1, z^\prime))\eta(t,z^\prime)\,dz^\prime\,dt\Big|^p\\
&\quad \cdot\mu_1(dx_1)\,\mu_1(dy_1).
\end{align*}

Let us now estimate $I$ and $II$ separately and first consider $II$. By integrating by parts and H\"older's inequality,
\begin{align*}
&\Big|\int_{Q_r^d}(D^2_{x^\prime}u(t,x_1, z^\prime)-D^2_{x^\prime}u(t, y_1, z^\prime))\eta(t,z^\prime)dz^\prime\, dt\Big|^p\\
&=\Big|\int_{Q_r^d}(D_{x^\prime}u(t,x_1, z^\prime)-D_{x^\prime}u(t, y_1, z^\prime))D_{x^\prime}\eta(t,z^\prime)dz^\prime\, dt\Big|^p\\
&\le \int_{Q_r^d}|D_{x^\prime}u(t,x_1, z^\prime)-D_{x^\prime}u(t, y_1, z^\prime)|^pdz^\prime\, dt \cdot (\int_{Q_r^d}|D_{x^\prime}\eta|^qdz^\prime\, dt)^{\frac{p}{q}},
\end{align*}
where $1/p+1/q=1$. From \eqref{eq0708_1}, we have
\begin{equation*}
\Big(\int_{Q_r^d}|D_{x^\prime}\eta|^qd\sigma\,dz^\prime\Big)^{\frac{p}{q}}\le Cr^{-(d+1+p)}.
\end{equation*}
We plug the two inequalities above into $II$ to achieve
\begin{align*}
&II\le\\
& Cr^{-(d+1+p)}\dashint_{B^{1+}_r(\mathfrak{y}_1)}\dashint_{B^{1+}_r(\mathfrak{y}_1)} \int_{Q_r^d}|D_{x^\prime}u(t,x_1, z^\prime)-D_{x^\prime}u(t, y_1, z^\prime)|^pdz^\prime\, dt\\
&\cdot\mu_1(dx_1)\,\mu_1(dy_1).
\end{align*}
Applying Lemma \ref{lemma 4.1} with $d=1$, we get
\begin{equation*}
II\le C\dashint_{Q^+_r(\mathfrak{y}_1)}|D_1D_{x^\prime}u|^p \,\mu(dx\,dt) \le C\kappa^{d+\theta+2}(|DD_1 u|^p)_{Q^+_{\kappa r}(\mathfrak{y}_1)}
\end{equation*}
because $|Q^+_r(\mathfrak{y}_1)|\ge Cr^{d+\theta+2}$.

Next, we estimate $I$, which can be written as
\begin{equation*}
I=\dashint_{Q^+_r(\mathfrak{y}_1)}\Big|\int_{Q^{d}_r}\eta(s,y^\prime)(D_{x^\prime}^2 u(t,x_1,x^\prime)-D_{x^\prime}^2 u(s,x_1,y^\prime))\,dy^\prime\,ds\Big|^p\,\mu(dx\,dt).
\end{equation*}
By H\"older's inequality,
\begin{align*}
I&\le  \dashint_{Q^+_r(\mathfrak{y}_1)}\int_{Q_r^d}|D^2_{x^\prime}u(t,x_1,x^\prime)-D^2_{x^\prime}u(s,x_1,y^\prime)|^p\,dy^\prime\,ds\,
\mu(dx\,dt)\\
&\quad \cdot \left( \int_{Q_r^d}|\eta|^q\,dy^\prime\,ds \right)^{\frac{p}{q}}.
\end{align*}
Since
\begin{equation*}
\left( \int_{Q_r^d}|\eta|^q\,dy^\prime\,ds \right)^{\frac{p}{q}}\le Cr^{-(d+1)},%\label{eq 4.131}
\end{equation*} we have
\begin{equation}
							\label{eq 3.1}
\begin{aligned}
I \le & C\dashint_{Q^+_r(\mathfrak{y}_1)}\dashint_{Q_r^d}|D^2_{x^\prime}u(t,x_1,x^\prime)-D^2_{x^\prime}u(s,x_1,y^\prime)|^p \, dy^\prime\,ds\,\mu(dx\,dt)
\\
& = C \dashint_{B_r^{1+}(\mathfrak{y_1})} I(x_1) \, x_1^{\theta-d} \,dx_1,
\end{aligned}
\end{equation}
where
$$
I(x_1)=\dashint_{Q_r^{d}}\dashint_{Q_r^d}|D^2_{x^\prime}u(t,x_1,x^\prime)-D^2_{x^\prime}u(s,x_1,y^\prime)|^p\,dy^\prime\,ds\,dx^\prime\, dt.
$$
We now estimate $I(x_1)$ by writing the equation \eqref{eq 5.222} as
\begin{equation*}
-u_t+\sum_{i,j>1}a_{ij}D_{ij}u=-a_{11}D_{11}u-\sum_{j=2}^d(a_{1j}+a_{j1})D_{1j}u+f.
\end{equation*}
Here, for each fixed $x_1 \in (0,\infty)$, we regard $u(t,x_1,x')$ as a solution to the above equation defined in $\bR \times \bR^{d-1}$. Then thanks to Theorem \ref{kry} with $d$ in place of $d+1$ and the triangle inequality,
we get
\begin{align*}
&I(x_1)\le C \dashint_{Q_r^d}|D^2_{x^\prime}u(t,x_1,x^\prime)-(D^2_{x^\prime}u)_{Q_r^d}(x_1)|^p\,dx^\prime \,dt\\
&\le C\kappa^{-p}\langle |D^2_{x^\prime}u|^p\rangle_{Q^d_{\kappa r}}(x_1)+C\kappa^{d+1}\langle |f|^p \rangle_{Q_{\kappa r}^d}(x_1)
+C\kappa^{d+1}\langle |DD_1 u|^p \rangle_{Q_{\kappa r}^d}(x_1),
\end{align*}
where, for fixed $x_1$,  $\langle |g|^p \rangle_{Q_{\kappa r}^d}(x_1)$ is the unweighted average of $g$ with respect to $(t, x^\prime)$ in the $d$ dimensional parabolic cylinder with radius $\kappa r$. We plug the inequality above into \eqref{eq 3.1} to obtain
\begin{align}
I&\le C\dashint_{B_r^{1+}(\mathfrak{y}_1)}\bigg(\kappa^{-p}\langle |D^2_{x^\prime}u|^p \rangle_{Q^d_{\kappa r}}(x_1)+\kappa^{d+1}\langle |f|^p \rangle_{Q_{\kappa r}^d}(x_1)\nonumber\\
&\qquad+\kappa^{d+1} \langle |DD_1 u|^p \rangle_{Q_{\kappa r}^d}(x_1)\bigg) x_1^{\theta-d}\,dx_1.\label{eq 4.141}
\end{align}
Bearing in mind that $\mu_1(B_r^{1+}(\mathfrak{y}_1))\ge Cr^{\zeta+1-d}$, where
$\zeta=\max\{d,\theta\}$, and $r=8/\kappa$, we obtain from \eqref{eq 4.141} that
\begin{align*}
I&\le C\kappa^{-(p+d-\zeta-1)}(|D_{x^\prime}^2u|^p)_{Q^+_{\kappa r}(\mathfrak{y}_1)}\\
&\quad +C\kappa^{\zeta+2} \left( (|f|^p)_{Q^+_{\kappa r}(\mathfrak{y}_1)}+(|DD_1 u|^p)_{Q^+_{\kappa r}(\mathfrak{y}_1)} \right).
\end{align*}
By the definition of $\zeta$, $p+d-\zeta-1>0$.
Combining the estimates of $I$ and $II$, we complete the proof of Case (i) with $\alpha=p+d-\zeta-1$.

{\em Case (ii):} $\mathfrak{y}_1>1$.
Set $v = D_k u$, $k = 1, \ldots,d$, and note that $v$ satisfies the divergence type equation
$$
v_t - D_i(a_{ij} D_j v) = D_k f
$$
in $B_{\kappa r}(\mathfrak{y}_1)$.
Since this is an interior estimate, we do not care about the boundary value of $v$ on $\{x_1 = 0\}$.
Then we repeat the second part of the proof of Lemma \ref{normal simple} with $D_k$ in place of $D_1$.
The lemma is proved.
\end{proof}

%%%%%%%%%%%%%%%%%%%%%%%%%%%%%%%%%%%%%%%%%%%
\section{proof of theorem \ref{main}}
In this section, we deal with operators with coefficients depending on both $x$ and $t$. We denote $L=a_{ij}D_{ij}$ and assume $p>1$, $\lambda\ge 0$, and $\theta\in (d-1,d-1+p)$.

First we need the following lemma.
\begin{lemma}
							\label{lem0716}
Let $0 < r \le R < \infty$ and $X = (t, x)$, $Y=(s,y) \in \overline{\bR^{d+1}_+}$. Then
\begin{equation}
                                \label{eq9.22}
\mu \left(Q_r^+(X)\right) \le C(d,\theta)\, \mu\left( Q^+_R(Y) \right)
\end{equation}
provided that $Q_r^+(X) \cap Q_R^+(Y) \ne \emptyset$.
\end{lemma}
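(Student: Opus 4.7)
The plan is to factor the spacetime measure as $\mu(Q^+_r(X)) = r^2\,\mu_d(B^+_r(x))$, since $x_1^{\theta-d}$ carries no $t$-dependence, and likewise $\mu(Q^+_R(Y)) = R^2\,\mu_d(B^+_R(y))$. Because $r \le R$, the time ratio $r^2/R^2$ is at most $1$, so it suffices to establish the spatial doubling estimate
\begin{equation*}
\mu_d(B^+_r(x)) \le C(d,\theta)\,\mu_d(B^+_R(y))
\end{equation*}
under the hypothesis that the two cylinders meet.

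The first step is to convert the spacetime non-emptiness into a spatial inclusion. Picking a common point $(\tau,\xi) \in Q^+_r(X)\cap Q^+_R(Y)$, one reads off $\xi\in B^+_r(x)\cap B^+_R(y)$, and then for any $z\in B^+_r(x)$ the triangle inequality gives $|z_1-y_1|<2r+R\le 3R$ and $|z'-y'|<2r+R\le 3R$; combined with $z_1>0$, this yields $B^+_r(x)\subset B^+_{3R}(y)$. Thus the problem reduces to the one-step doubling bound $\mu_d(B^+_{3R}(y)) \le C\,\mu_d(B^+_R(y))$.

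Since the weight depends only on $x_1$ and the ball factors as $B^+_\rho(y) = ((y_1-\rho)^+,y_1+\rho)\times B'_\rho(y')$, one computes
\begin{equation*}
\mu_d(B^+_\rho(y)) = c_{d-1}\,\rho^{d-1}\int_{(y_1-\rho)^+}^{y_1+\rho} s^{\theta-d}\,ds,
\end{equation*}
where $c_{d-1}=|B'_1(0)|$. After cancelling the $\rho^{d-1}$ factors, the doubling estimate reduces (up to a harmless $3^{d-1}$) to bounding the ratio of the one-dimensional integrals at radii $R$ and $3R$. Setting $\alpha := \theta-d+1$, which lies in $(0,p)$ by the assumption on $\theta$, each integral evaluates to $\alpha^{-1}$ times a difference of $\alpha$-th powers of the endpoints, and I would split into the three regimes $y_1\ge 3R$, $R\le y_1<3R$, and $y_1<R$. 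Each case is an elementary estimate giving a bound depending only on $\alpha$, hence on $d$ and $\theta$.

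No single case is truly an obstacle, but the one to check most carefully is the interior regime $y_1\ge 3R$, where the differences $(y_1+\kappa R)^\alpha-(y_1-\kappa R)^\alpha$ for $\kappa\in\{1,3\}$ are both comparable to $\kappa R\alpha\,y_1^{\alpha-1}$, producing a ratio bounded by a constant multiple of $3$; the boundary regime $y_1<R$ collapses both lower endpoints to $0$ and gives the ratio $(y_1+3R)^\alpha/(y_1+R)^\alpha\le 3^\alpha$ at once, which is why the positivity $\alpha>0$ is exactly what is needed. Combining the three steps yields
\begin{equation*}
\mu(Q^+_r(X)) \le R^2\,\mu_d(B^+_{3R}(y)) \le C\,R^2\,\mu_d(B^+_R(y)) = C\,\mu(Q^+_R(Y)),
\end{equation*}
which is \eqref{eq9.22}.
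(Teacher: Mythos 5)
Your proof is correct in substance and takes a genuinely different, more quantitative route than the paper. You factor the spacetime measure as $\mu(Q_r^+(X))=r^2\,\mu_d(B_r^+(x))$, use $r\le R$ to discharge the time part, reduce to a spatial inclusion $B_r^+(x)\subset B^+_{3R}(y)$, and then prove a one-dimensional doubling bound $\mu_1(B^{1+}_{3R}(y_1))\le C(\alpha)\,\mu_1(B^{1+}_R(y_1))$ by explicit integration of $s^{\theta-d}$ with casework on $y_1/R$. The paper instead scales to $R=1$ and argues more coarsely: when $y_1>10$ the cylinders sit well away from $\{x_1=0\}$, so the weight is essentially constant there and the Lebesgue-measure comparison suffices; when $y_1\le 10$ one simply observes that $\mu(Q_1^+(Y))$ has a uniform positive lower bound and $\mu(Q_r^+(X))\le\mu(Q_1^+(X))$ a uniform upper bound. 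Your approach isolates the underlying doubling property of the weight $x_1^{\theta-d+1-1}$ cleanly and would generalize; the paper's is shorter and avoids the explicit antiderivative.

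One small imprecision you should tighten: in the regime $y_1\ge 3R$ you assert that $(y_1+\kappa R)^\alpha-(y_1-\kappa R)^\alpha$ is ``comparable to $\kappa R\,\alpha\,y_1^{\alpha-1}$'' with a ratio bounded by a constant multiple of $3$. This comparability fails at the boundary $y_1=3R$ (where $y_1-3R=0$ and, e.g., for $\alpha<1$, the integrand $s^{\alpha-1}$ is not uniformly comparable to $y_1^{\alpha-1}$ over $(0,6R)$); the actual ratio there is $3^\alpha/(2^\alpha-1)$, which blows up as $\alpha\to 0^+$, so the constant must depend on $\alpha$, i.e.\ on $d,\theta$, as \eqref{eq9.22} permits. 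The conclusion is still correct: either note that the ratio $\bigl[(t+3)^\alpha-(t-3)^\alpha\bigr]/\bigl[(t+1)^\alpha-(t-1)^\alpha\bigr]$ (with $t=y_1/R\ge 3$) is continuous on $[3,\infty)$ and tends to $3$ at infinity, hence is bounded; or move the case split to $y_1\ge 6R$, where $y_1-3R\ge y_1/2$ and the pointwise comparability of the integrand to $y_1^{\alpha-1}$ holds with constants depending only on $\alpha$.
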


\begin{proof}
By scaling, without loss of generality we may assume that $R=1$. We then consider two cases depending on $y_1$.

{\em Case 1:} $y_1>10$. In this case, we have
$$
x_1>8,\quad 1<\max \{y_1+1,x_1+r\}/\min\{y_1-1,x_1-r\}<11/7,
$$
which allows us to use the Lebesgue measure in comparing $\mu\left(Q_r^+(X)\right)$ and $\mu\left(Q_1^+(Y)\right)$. Therefore, \eqref{eq9.22} clearly holds since $r\le 1$.

{\em Case 2:} $y_1\in [0,10]$. In this case, we have $x_1\in [0,12)$. It is then easily seen that $\mu(Q_1^+(Y))$ is bounded from below by a constant $c(d,\theta)>0$ and $\mu(Q_r^+(X))\le \mu(Q_1^+(X))$ is bounded from above by a constant $C(d,\theta)>0$. Thus, \eqref{eq9.22} still holds.

The lemma is proved.
\end{proof}

The following two lemmas are mean oscillation estimates for the operator $a_{ij}(t,x) D_{ij}$. We prove them by using the mean oscillation estimates for $a_{ij}(t) D_{ij}$ proved in Section \ref{sec3} combined with a perturbation argument.

\begin{lemma}\label{mean 1}
Let $R>0$, $\kappa \ge 32$, and $\beta, \beta^\prime\in (1, \infty)$ satisfying   $1/\beta+1/\beta^\prime=1$. Suppose that $u\in C_{\text{loc}}^{\infty}(\overline{\bR^{d+1}_+})$ is compactly supported on $\overline{Q^+_R(X_1)}$, where $X_1=(\hat t,\hat{x})\in \overline{\bR^{d+1}_+}$. Moreover, $D_1u=0$ on $\{x_1=0\}$ and $f:=-u_t+Lu$. Then for any $r>0$ and $Y=(s,y)\in \overline{\bR^{d+1}_+}$, we have
\begin{align}\nonumber
&(|DD_1u-(DD_1 u)_{Q_r^+(Y)}|^p)_{Q_r^+(Y)}\le C_0\kappa^{-p/2}(|DD_1 u|^p)_{Q_{\kappa r}^+(Y)}\\
&+C_0\kappa^{d+\theta+2}(|f|^p)_{Q^+_{\kappa r}(Y)}+C_1\kappa^{d+\theta+2}(A_R^\#)^{1/\beta^{\prime}}(|D^2 u|^{\beta p})_{Q_{\kappa r}^+(Y)}^{1/\beta},\label{eq 4.107}
\end{align}
where $C_0=C_0(d,\delta, p,\theta)$ and $C_1=C_1(d,\delta,p,\theta, \beta)$.
\end{lemma}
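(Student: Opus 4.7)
The strategy is a classical freezing-the-coefficients perturbation argument that reduces matters to Lemma \ref{normal simple}. For the given $Y=(s,y)$ and $r>0$, I freeze the spatial dependence of the leading coefficients by setting
$$
\bar a_{ij}(t) := \dashint_{B^+_{\kappa r}(y)} a_{ij}(t,z)\,\mu_d(dz),
$$
so that the equation $-u_t + a_{ij}(t,x) D_{ij} u = f$ in $Q^+_{\kappa r}(Y)$ can be rewritten as
$$
-u_t + \bar a_{ij}(t)\,D_{ij} u = \tilde f, \qquad \tilde f := f + \bigl(\bar a_{ij}(t) - a_{ij}(t,x)\bigr) D_{ij} u.
$$
Since the frozen coefficients $\bar a_{ij}(t)$ inherit the uniform ellipticity constant $\delta$ from $a_{ij}$, and since the Neumann boundary condition $D_1 u = 0$ on $\{x_1=0\}$ is preserved, the translation invariance of the frozen equation in $t$ and $x'$ allows me to apply Lemma \ref{normal simple} (with $\alpha = 1/2$, which is permitted thanks to $\kappa \ge 32$) to obtain
$$
(|DD_1 u - (DD_1 u)_{Q^+_r(Y)}|^p)_{Q^+_r(Y)} \le C \kappa^{-p/2}(|DD_1 u|^p)_{Q^+_{\kappa r}(Y)} + C \kappa^{d+\theta+2}(|\tilde f|^p)_{Q^+_{\kappa r}(Y)}.
$$

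The next step is to split $\tilde f$ into $f$ and the perturbation term $(\bar a_{ij} - a_{ij})\, D_{ij} u$; the former produces the $C_0\kappa^{d+\theta+2}(|f|^p)$ contribution in \eqref{eq 4.107}. The latter I control by H\"older's inequality with conjugate exponents $\beta$ and $\beta'$:
$$
(|(\bar a_{ij} - a_{ij}) D_{ij} u|^p)_{Q^+_{\kappa r}(Y)} \le (|\bar a_{ij} - a_{ij}|^{p\beta'})_{Q^+_{\kappa r}(Y)}^{1/\beta'}\,(|D^2 u|^{p\beta})_{Q^+_{\kappa r}(Y)}^{1/\beta}.
$$
Because $|a_{ij}| \le 1/\delta$ gives $|\bar a_{ij} - a_{ij}| \le 2/\delta$, the first factor can be collapsed using $|\bar a_{ij}-a_{ij}|^{p\beta'}\le(2/\delta)^{p\beta'-1}|\bar a_{ij}-a_{ij}|$ to yield
$$
(|\bar a_{ij} - a_{ij}|^{p\beta'})_{Q^+_{\kappa r}(Y)}^{1/\beta'} \le C(\delta,p,\beta')\,\bigl[\mathrm{osc}_x(a_{ij}, Q^+_{\kappa r}(Y))\bigr]^{1/\beta'},
$$
where the identification with $\mathrm{osc}_x$ is immediate from the definition of $\bar a_{ij}(t)$.

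The final step is to dominate $\mathrm{osc}_x(a_{ij}, Q^+_{\kappa r}(Y))$ by $A_R^\#$. When $\kappa r \le R$ this is nothing but the definition of $A_R^\#$. The case $\kappa r > R$ is where I expect the main bookkeeping to sit: here the BMO$_x$ quantity $A_R^\#$ directly controls oscillations only up to scale $R$, so the compact support of $u$ in $\overline{Q^+_R(X_1)}$ must be used. If $Q^+_{\kappa r}(Y) \cap Q^+_R(X_1) = \emptyset$, then every term involving $u$ on both sides of \eqref{eq 4.107} vanishes and the inequality is trivial; otherwise, Lemma \ref{lem0716} yields $\mu(Q^+_R(X_1)) \le C\, \mu(Q^+_{\kappa r}(Y))$, which lets me pass averages over $Q^+_{\kappa r}(Y)$ to averages over $Q^+_R(X_1)$ up to a harmless constant and then reduce the oscillation estimate to scale $R$. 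Apart from this case split, everything else is a direct assembly of Lemma \ref{normal simple}, H\"older's inequality, and Assumption \ref{assumption}.
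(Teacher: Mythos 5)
Your overall architecture---freeze the coefficients, apply Lemma \ref{normal simple} to the frozen equation, split $\tilde f$, and control the perturbation term by H\"older plus Assumption \ref{assumption}---matches the paper's. The difficulty is in the specific freezing you choose and what happens when $\kappa r > R$, and there you have a genuine gap.

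You define $\bar a_{ij}(t)$ once and for all as the $\mu_d$-average of $a_{ij}(t,\cdot)$ over $B^+_{\kappa r}(y)$. After H\"older and the boundedness collapse you land on $\mathrm{osc}_x(a_{ij}, Q^+_{\kappa r}(Y))$, which, as you note, is bounded by $A_R^\#$ only when $\kappa r \le R$. For $\kappa r > R$ your fix is to invoke the support of $u$ together with Lemma \ref{lem0716}. But Lemma \ref{lem0716} is a \emph{measure} comparison; it lets you replace the normalizing factor $\mu(Q^+_{\kappa r}(Y))^{-1}$ by $C\,\mu(Q^+_R(X_1))^{-1}$, and together with the support of $u$ it lets you restrict the domain of integration to $Q^+_R(X_1)$. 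What it cannot do is change the reference constant $\bar a_{ij}(t)$ itself: you are still measuring the deviation of $a_{ij}(t,x)$ from the average of $a_{ij}(t,\cdot)$ over the \emph{large} ball $B^+_{\kappa r}(y)$, and the quantity $\dashint_{Q^+_R(X_1)}|\bar a_{ij}(t)-a_{ij}(t,x)|\,\mu(dx\,dt)$ differs from $\mathrm{osc}_x(a_{ij},Q^+_R(X_1))$ by a term of the form $\dashint |\bar a_{ij}(t)-(a_{ij})_{B^+_R(\hat x)}(t)|$, i.e.\ the gap between the average over a ball of radius $\kappa r$ and over a ball of radius $R$. That gap involves the oscillation of $a_{ij}$ at scale $\kappa r \gg R$ and is not controlled by $A_R^\#$. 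So the step ``reduce the oscillation estimate to scale $R$'' does not actually go through with your choice of $\bar a_{ij}$.

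The paper avoids this by \emph{not} committing to an average at the outset: it freezes $a_{ij}$ at an arbitrary point $z$, so that the perturbation term $I_z$ depends on $z$, and only afterwards averages the resulting inequality over $z\in B^+$, where $B^+$ is defined to be $B^+_{\kappa r}(y)$ when $\kappa r < R$ but $B^+_R(\hat x)$ when $\kappa r \ge R$. Because the left-hand side is $z$-independent, averaging costs nothing, and because the integration in $z$ is confined to a ball of radius $\le R$ containing the support of $u$, the resulting double integral $\dashint_{B^+}\int_{Q^+}|a_{ij}(t,z)-a_{ij}(t,x)|\,\mu(dx\,dt)\,\mu_d(dz)$ collapses to $\mu(Q^+)\,\mathrm{osc}_x(a_{ij},Q^+)$ with $Q^+$ at scale $\le R$, hence $\le \mu(Q^+) A_R^\#$. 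The fix for your version is therefore to redefine $\bar a_{ij}(t)$ as the average over $B^+_{\min\{\kappa r, R\}}$ (with appropriate center, $\hat x$ when $\kappa r \ge R$) or, more simply, to adopt the paper's freeze-at-a-point-then-average-over-$z$ device; without one of these changes the argument breaks in the regime $\kappa r > R$.
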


\begin{proof}
Throughout the proof, we assume $Q_R^+(X_1) \cap Q_{\kappa r}^+(Y) \ne \emptyset$. Otherwise, \eqref{eq 4.107} holds trivially.
Fix a $z\in \overline{\bR^{d}_+}$ and set
$$L_zu=a_{ij}(t,z)D_{ij}u(t,x).$$
Then we have
$$-u_t+L_zu=f+(a_{ij}(t,z)-a_{ij}(t,x))D_{ij}u(t,x):=\hat{f}.$$
%\begin{equation*}
%-u_t+L_0 u-\lambda u=f+(\overline{a}_{ij}-a_{ij})D_{ij}u:=\hat{f}.
%\end{equation*}
It follows from Lemma \ref{normal simple} with $\alpha=1/2$ and a translation of the coordinates that
\begin{align}\nonumber
&(|DD_1u- (DD_1u)_{Q_r^+(Y)}|^p)_{Q_r^+(Y)}\\
&\le C_0\kappa^{-p/2}(|DD_1u|^p)_{Q_{\kappa r}^+(Y)}+C_0\kappa^{d+\theta+2}(|\hat{f}|^p)_{Q_{\kappa r}^+(Y)},\label{eq 4.104}
\end{align}
where $C_0=C_0(d,\delta,p,\theta)$. By the definition of $\hat{f}$, the triangle inequality, and the fact that $u$ vanishes outside $\overline{Q_R^+(X_1)}$, we have
\begin{equation}
(|\hat{f}|^p)_{Q_{\kappa r}^+(Y)}\le C(|f|^p)_{Q_{\kappa r}^+(Y)}+C_0I_z,\label{eq 4.106}
\end{equation}
where
$$I_z=\big(|(a_{ij}(t,z)-a_{ij}(t,x))\chi_{Q_R^+(X_1)}D_{ij}u|^p\big)_{Q_{\kappa r}^+(Y)},$$
and $\chi_{Q^+_R(X_1)}$ is the indicator function of $Q^+_R(X_1)$.
Denote $B^+$ to be $B_{\kappa r}^+(y)$ if $\kappa r<R$, or to be $B_R^+(\hat{x})$ otherwise. Define $Q^+$ in the same fashion.
We note that
\begin{equation}
							\label{eq0716_1}
\mu(Q^+) \le C(d,\theta) \mu\left( Q^+_{\kappa r}(Y) \right)
\end{equation}
provided that $Q_R^+(X_1) \cap Q_{\kappa r}^+(Y) \ne \emptyset$. It is obvious if $\kappa r < R$, i.e., $Q^+ = Q_{\kappa r}^+(Y)$. If $\kappa r \ge R$, the inequality is proved in Lemma \ref{lem0716}.

Combining \eqref{eq 4.104} and \eqref{eq 4.106} and taking the average of each term with respect to $z$ in $B^+$, we reach
\begin{align}\nonumber
&(|DD_1u- (DD_1u)_{Q_r^+(Y)}|^p)_{Q_r^+(Y)}\le C_0\kappa^{-p/2}(|DD_1u|^p)_{Q_{\kappa r}^+(Y)}\\
&\quad +C_0\kappa^{d+\theta+2}(|f|^p)_{Q_{\kappa r}^+(Y)}+C_0\kappa^{d+\theta+2}\dashint_{B^+}I_z\,\mu_d(dz).\label{eq 4.104b}
\end{align}
Since $u$ vanishes outside $\overline{Q_R^+(X_1)}$, by H\"older's inequality, we get
\begin{align}\nonumber
&\dashint_{B^+}I_z\,\mu_d(dz)\\\nonumber
&=\frac{1}{\mu(Q_{\kappa r}^+(Y))}\dashint_{B^+}\int_{Q_{\kappa r}^+(Y)\cap Q_R^+(X_1)}|(a_{ij}(t,z)-a_{ij}(t,x))D_{ij}u|^p\\
&\quad\cdot \mu(dx\,dt)\,\mu_d(dz)\nonumber\\
\nonumber
&\le \frac{1}{\mu(Q_{\kappa r}^+(Y))}\dashint_{B^+}\Big(\int_{Q^+}|a_{ij}(t,z)-a_{ij}(t,x)|^{\beta^\prime p}\,\mu(dx\,dt)\Big)^{1/\beta^\prime}\,\mu_d(dz)\\\label{eq 4.151}
&\quad \cdot \Big(\int_{Q_{\kappa r}^+(Y)\cap Q_R^+(X_1)}|D^2 u|^{\beta p}\,\mu(dx\,dt)\Big)^{1/\beta}.
\end{align}
By the boundedness of $a_{ij}$, H\"older's inequality, and the definition of osc$_x$, we have
\begin{align}\nonumber
&\dashint_{B^+}\Big(\int_{Q^+}|a_{ij}(t,z)-a_{ij}(t,x)|^{\beta^\prime p}\,\mu(dx\,dt)\Big)^{1/\beta^\prime}\,\mu_d(dz)\\\nonumber
&\le C_1\Big(\dashint_{B^+}\int_{Q^+}|a_{ij}(t,z)-a_{ij}(t,x)|
\,\mu(dx\,dt)\,\mu_d(dz)\Big)^{1/\beta^\prime}\\\label{eq 4.152}
&\le C_1 \left( \mu(Q^+)\,\text{osc}{_x}(a_{ij}, Q^+) \right)^{1/\beta^\prime},
\end{align}
%\begin{align}\nonumber
%&(\dashint_{B}|\overline{a}_{ij}-a_{ij}|^{\beta^\prime p}\chi_{Q_R^+(X_1)}\mu(dx\,dt))^{1/\beta^\prime} (\dashint_{Q_{\kappa r}^+(Y)}|D_{ij}u|^{\beta p}\chi_{Q_R^+(X_1)}\mu(dx\,dt))^{1/\beta}\\\nonumber
%&\le (\mu(Q)/\mu(Q_{\kappa r}^+(Y)))^{1/\beta^\prime}(|\hat{a}_{ij}-a_{ij}|^{\beta^\prime p})_Q^{1/\beta^\prime}(|D^2 u|^{\beta p})^{1/\beta}_{Q^+_{\kappa r}(Y)}\\
%&\le C_1\epsilon^{1/\beta^\prime}(|D^2 u|^{\beta p})_{Q^+_{\kappa r}(Y)}^{1/\beta},\label{eq 4.105}
%\end{align}
where $C_1=C_1(d,\delta,p,\beta)$.  From \eqref{eq 4.151},  \eqref{eq 4.152}, and \eqref{eq0716_1}, we obtain
\begin{align}\label{eq 4.153}
\dashint_{B^+}I_z\,\mu_d(dz)\le C_1(A_R^\#)^{1/\beta^\prime}(|D^2 u|^{\beta p})_{Q_{\kappa r}^+(Y)}^{1/\beta}.
\end{align}
Combining \eqref{eq 4.104b} and \eqref{eq 4.153}, we get \eqref{eq 4.107}. The lemma is proved.
\end{proof}

Following exactly the proof of Lemma \ref{mean 1} with Lemma \ref{key lemma} in place of Lemma \ref{normal simple}, we obtain the lemma below.
\begin{lemma}\label{mean 2}
Let $R>0, \kappa \ge 32$, and $\beta, \beta^\prime\in (1, \infty)$ satisfying   $1/\beta+1/\beta^\prime=1$. Let $u\in C_{\text{loc}}^{\infty}(\overline{\bR^{d+1}_+})$ be compactly supported on $\overline{Q^+_R(X_1)}$, where $X_1\in \overline{\bR^{d+1}_+}$. Moreover, $D_1u=0$ on $\{x_1=0\}$ and $f:=-u_t+Lu$.
Then for any $r>0$ and $Y=(s,y)\in \overline{\bR^{d+1}_+}$, we have
\begin{align}\nonumber
&(|D_{x^\prime}^2u-(D_{x^\prime}^2u)_{Q_r^+(Y)}|^p)_{Q_r^+(Y)}\\\nonumber
&\le C_0\kappa^{-\alpha}(|D_{x^\prime}^2 u|^p)_{Q_{\kappa r}^+(Y)} + C_0\kappa^{d+\theta+2}(|f|^p+|DD_1u|^p)_{Q^+_{\kappa r}(Y)}\\
&\quad +C_1\kappa^{d+\theta+2}(A_R^\#)^{1/\beta^{\prime}}(|D^2 u|^{\beta p})_{Q_{\kappa r}^+(Y)}^{1/\beta},\nonumber
\end{align}
where $C_0=C_0(d,\delta, p,\theta)$, $C_1=C_1(d,\delta,p,\theta, \beta)$, and $\alpha=\alpha(d,\theta, p)>0$.
\end{lemma}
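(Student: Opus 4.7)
The plan is to mimic the proof of Lemma \ref{mean 1} verbatim, substituting Lemma \ref{key lemma} in place of Lemma \ref{normal simple}. The only structural change is that Lemma \ref{key lemma} produces an additional $|DD_1u|^p$ contribution on the right-hand side, which is exactly what the target inequality allows.

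First, I would fix an arbitrary $z\in\overline{\bR^d_+}$ and introduce the frozen operator $L_z u = a_{ij}(t,z)D_{ij}u(t,x)$. Writing $-u_t + L_z u = \hat{f}$ with
$$\hat{f} = f + \bigl(a_{ij}(t,z)-a_{ij}(t,x)\bigr)D_{ij}u(t,x),$$
the equation at the fixed spatial freezing point has coefficients depending only on $t$, so Lemma \ref{key lemma} (after translation to $Y$) applies and yields
$$(|D_{x'}^2 u - (D_{x'}^2 u)_{Q_r^+(Y)}|^p)_{Q_r^+(Y)} \le C_0\kappa^{-\alpha}(|D_{x'}^2 u|^p)_{Q_{\kappa r}^+(Y)} + C_0\kappa^{d+\theta+2}\bigl(|\hat f|^p + |DD_1 u|^p\bigr)_{Q_{\kappa r}^+(Y)}.$$
We may clearly assume $Q_R^+(X_1)\cap Q_{\kappa r}^+(Y)\neq\emptyset$, since otherwise $u$ vanishes on $Q_{\kappa r}^+(Y)$ and the conclusion is trivial.

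Next, by the triangle inequality and the fact that $u$ is supported in $\overline{Q_R^+(X_1)}$,
$$(|\hat f|^p)_{Q_{\kappa r}^+(Y)} \le C(|f|^p)_{Q_{\kappa r}^+(Y)} + C_0 I_z,$$
with $I_z$ defined, exactly as in the proof of Lemma \ref{mean 1}, by
$$I_z = \bigl(|(a_{ij}(t,z)-a_{ij}(t,x))\chi_{Q_R^+(X_1)}D_{ij}u|^p\bigr)_{Q_{\kappa r}^+(Y)}.$$
Averaging the previous display with respect to $z$ over the set $B^+$ (taken to be $B_{\kappa r}^+(y)$ when $\kappa r < R$, and $B_R^+(\hat{x})$ otherwise, so that the companion cylinder $Q^+$ satisfies $\mu(Q^+)\le C(d,\theta)\,\mu(Q_{\kappa r}^+(Y))$ by Lemma \ref{lem0716}) leads to a right-hand side that agrees with the target inequality except for the term $\dashint_{B^+} I_z\,\mu_d(dz)$.

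The main work, and the only real step to carry out, is to bound this averaged perturbation term by $C_1(A_R^\#)^{1/\beta'}(|D^2u|^{\beta p})_{Q_{\kappa r}^+(Y)}^{1/\beta}$. Applying Hölder's inequality in $(t,x)$ with exponents $\beta$ and $\beta'$, extracting the $D^2 u$ factor, and then using the boundedness of $a_{ij}$ together with Hölder in $z$ reduces the coefficient piece to
$$\dashint_{B^+}\int_{Q^+}|a_{ij}(t,z)-a_{ij}(t,x)|\,\mu(dx\,dt)\,\mu_d(dz) \le C\,\mu(Q^+)\,\mathrm{osc}_x(a_{ij},Q^+) \le C\,\mu(Q^+)\,A_R^\#,$$
which by Lemma \ref{lem0716} is bounded by $C\mu(Q_{\kappa r}^+(Y))A_R^\#$. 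Combining all pieces yields the desired inequality with $C_0 = C_0(d,\delta,p,\theta)$, $C_1 = C_1(d,\delta,p,\theta,\beta)$, and the same $\alpha = \alpha(d,\theta,p)>0$ as in Lemma \ref{key lemma}. I do not anticipate any serious obstacle: all the hard analytic content has been done inside Lemma \ref{key lemma}, and the perturbation manipulations are identical to those carried out in full in the proof of Lemma \ref{mean 1}.
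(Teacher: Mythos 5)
Your proposal is correct and matches the paper's intended argument exactly: the paper itself states that Lemma \ref{mean 2} follows by repeating the proof of Lemma \ref{mean 1} verbatim with Lemma \ref{key lemma} substituted for Lemma \ref{normal simple}, which is precisely what you carry out. The only adjustments you make — the extra $|DD_1u|^p$ term carried along from Lemma \ref{key lemma}, and inheriting $\alpha=\alpha(d,\theta,p)$ from that lemma in place of the fixed exponent $p/2$ — are the right ones and do not disturb the perturbation/averaging step, so there is nothing to add.
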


Next we recall the Hardy--Littlewood maximal function theorem and the Fefferman--Stein theorem on sharp functions. Let
$$\mathcal{Q}=\{ Q^+_r(z): z=(t,x)\in\overline{\bR^{d+1}_+}, r>0 \}.$$
For a function $g$ defined on $\bR^{d+1}_+$, the weighted (parabolic) maximal and sharp functions of $g$ are given by
\begin{align*}
\cM g(t,x)&=\sup_{Q\in \mathcal{Q}, (t,x)\in Q}\dashint_Q |g(s,y)|\,\mu(dy\,ds),\\
g^\#(t,x)&=\sup_{Q\in \mathcal{Q}, (t,x)\in Q}\dashint_Q |g(s,y)-(g)_Q|\,\mu(dy\,ds).
\end{align*}
For any $\theta>d-1$ and $g\in \bL_{p,\theta}(\bR^{d+1}_+)$,  we have
$$\|g\|_{\bL_{p,\theta}(\bR^{d+1}_+)}\le C\|g^\#\|_{\bL_{p,\theta}(\bR^{d+1}_+)},\quad \|\cM g\|_{\bL_{p,\theta}(\bR^{d+1}_+)}\le C\|g\|_{\bL_{p,\theta}(\bR^{d+1}_+)},$$
where $p\in (1,\infty)$ and $C=C(d,p,\theta)$. The first inequality above is known as the Fefferman--Stein theorem on sharp functions and the second one is the Hardy--Littlewood maximal function theorem, for instance see \cite[Chapter 3]{kry07}

Now we are ready to prove our main theorem.

\begin{proof}[Proof of Theorem \ref{main}]
By the method of continuity, it is enough to prove the a priori estimate \eqref{main estimate}.
Moreover, since the set of functions in $C^\infty(\overline{(-\infty,T) \times \bR^d_+})$ vanishing for large $(t,x)$ is dense in $W_{p,\theta}^{1,2}(-\infty,T)$, we only need to prove \eqref{main estimate} for infinitely differentiable functions with compact support.
In this case, the proof of \eqref{main estimate} can be divided into several steps.

Step 1: We consider $\lambda=0$, $b_i=c=0$, $T=\infty$, and $u\in C^\infty(\overline{\bR^{d+1}_+})$ vanishing outside $\overline{Q_{R_0}^+(X_1)}$ for some $X_1\in \overline{\bR^{d+1}}$, where $R_0$ is from Assumption \ref{assumption}. Let $\kappa\ge 32$ be a constant to be determined later. We fix $q\in (1,p)$ and $\beta\in(1,\infty)$, depending only on $p$ and $\theta$, such that $\beta q<p$ and $\theta<d-1+q$. Let $\beta^\prime$ be such that $1/\beta+1/\beta^\prime=1$. By applying Lemma \ref{mean 1} with $q$ in place of $p$ and using Assumption \ref{assumption}, we obtain
\begin{align*}
(DD_1 u)^\#(Y)\le& C_0 \kappa ^{-1/2}\cM^{1/q}(|DD_1u|^q)(Y)+C_0 \kappa^{(d+\theta+2)/q}\cM^{1/q}(|f|^q)(Y)\\
&+C_0\kappa^{(d+\theta+2)/q}\rho^{1/(\beta^\prime q)}\cM^{1/(\beta q)}(|D^2 u|^{\beta q})(Y)
\end{align*}
for any $Y\in \overline{\bR^{d+1}_+}$, where $C_0=C_0(d,\delta, p,\theta)$. This estimate, together with Fefferman--Stein theorem on sharp functions and the Hardy--Littlewood theorem on maximal functions, gives
\begin{align}\nonumber
&\|DD_1 u\|_{p, \theta}\le C\|(DD_1 u)^\#\|_{p, \theta}\\\nonumber
&\le C_0\kappa^{-1/2}\|\cM^{1/q}(|DD_1u|^q)\|_{p, \theta}+C_0\kappa^{(d+\theta+2)/q}\|\cM^{1/q}(|f|^q)\|_{p, \theta}\\\nonumber
&\quad +C_0\kappa^{(d+\theta+2)/q}\rho^{1/(\beta^\prime q)}\|\cM^{1/(\beta q)}(|D^2 u|^{\beta q})\|_{p,\theta}\\\label{eq 4.154}
&\le C_0\kappa^{-1/2}\|DD_1 u\|_{p,\theta}+C_0\kappa^{(d+\theta+2)/q}\|f\|_{p,\theta}+C_0\kappa^{(d+\theta+2)/q}\rho^{1/(\beta^\prime q)}\|D^2 u\|_{p,\theta}.
\end{align}
In the same way, we apply Lemma \ref{mean 2} instead of Lemma \ref{mean 1} to obtain the estimate of $D_{x^\prime}^2 u$:
\begin{align}\nonumber
\|D_{x^\prime}^2 u\|_{p,\theta}&\le C_0\kappa^{-\alpha/q}\|D_{x^\prime}^2 u\|_{p,\theta}+C_0\kappa^{(d+\theta+2)/q}(\|f\|_{p,\theta}+\|DD_1u\|_{p,\theta})\\\label{eq 4.155}
&\quad +C_0\kappa^{(d+\theta+2)/q}\rho^{1/(\beta^\prime q)}\|D^2 u\|_{p,\theta}.
\end{align}
By choosing $\kappa$ sufficiently large, from \eqref{eq 4.154} and \eqref{eq 4.155} we obtain
\begin{align}\label{eq 4.156}
&\|DD_1 u\|_{p, \theta}\le C_0\|f\|_{p,\theta}+C_0\rho^{1/(\beta^\prime q)}\|D^2 u\|_{p, \theta},\\\label{eq 4.157}
&\|D_{x^\prime}^2u\|_{p, \theta}\le C_0(\|f\|_{p,\theta}+\|DD_1 u\|_{p,\theta})+C_0\rho^{1/(\beta^\prime q)}\|D^2 u\|_{p, \theta}.
\end{align}
We combine \eqref{eq 4.156} and \eqref{eq 4.157} together to get
\begin{equation*}
\|D^2 u\|_{p,\theta}\le C_0\|f\|_{p,\theta}+C_0\rho^{1/(\beta^\prime q)}\|D^2 u\|_{p, \theta}.
\end{equation*}
Taking $\rho$ sufficiently small depending only on $d,\delta,p,\theta$, we arrive at
\begin{equation}
\|D^2u\|_{p,\theta}\le C_0\|f\|_{p,\theta}.\label{eq 4.161}
\end{equation}
On the other hand, since
$$u_t=a_{ij}D_{ij}u-f,$$
we have
\begin{equation}\label{eq 4.202}
\|u_t\|_{p,\theta}\le C_0\|f\|_{p,\theta}+C_0\|D^2 u\|_{p,\theta}.
\end{equation}
By Hardy's inequality (Lemma \ref{hardy}) with $r=\infty$,
\begin{equation}
\|M^{-1}D_1u\|_{p,\theta}\le C_0\|D_{11}u\|_{p,\theta} \label{eq 5.223}.
\end{equation}
Combining  \eqref{eq 4.161}, \eqref{eq 4.202}, and \eqref{eq 5.223}, we obtain \eqref{main estimate} for $\lambda = 0$.

Step 2: We remove the assumption that $u$ is compactly supported in $Q_{{R_0}}^+(X_1)$. By a standard partition of the unity argument with \eqref{eq 4.161}--\eqref{eq 5.223} (cf. \cite[Theorem 1.6.4]{kry07}) we see that
\begin{equation}
\|u_t\|_{p,\theta}+\|M^{-1}D_1u\|+\|D^2u\|_{p,\theta}\le C_0\|f\|_{p,\theta}+{C_1}\|u\|_{p,\theta},\label{eq 4.162}
\end{equation}
where $C_0 = C_0(d,\delta,p,\theta)$ and $C_1 = C_1(d,\delta,p,\theta,R_0)$.

Step 3: We still assume that $b_i = c = 0$ and $T=\infty$, but $\lambda$ is not necessarily zero. In this case, we follow an idea of S. Agmon.
Since $$-u_t+a_{ij}D_{ij}u=f-\lambda u,$$
by \eqref{eq 4.162} we have
\begin{align*}
\|u_t\|_{p,\theta}+\|M^{-1}D_1u\|_{p,\theta}+\|D^2 u\|_{p,\theta}&\le C_0\|f-\lambda u\|_{p,\theta}+C_1\|u\|_{p,\theta}\\
&\le C_0\|f\|_{p,\theta}+(C_0\lambda+C_1)\|u\|_{p,\theta}.
\end{align*}
Hence it is sufficient to show that  for large $\lambda$
$$\lambda\|u\|_{p,\theta}\le C_0\|f\|_{p,\theta}.$$

We pick a function $\zeta(y)\in C_0^\infty(\bR), \zeta\not\equiv 0$ and introduce the following notation
$$z=(x,y), \quad \hat{u}(t,z)=u(t,x)\zeta(y)\cos(\sqrt{\lambda}y),\quad \hat{L}{u}=L(t,x)u(t,z)+u_{yy}(t,z).$$
Finally, set $\bR^{d+2}_+=\{(t,z): x_1> 0\}$ and
$$\mathbb{B}_r(z)=(x_1-r,x_1+r)\times B_r(x^\prime,y), \quad \mathbb{Q}_r(t, z)=(t-r^2, t)\times \mathbb{B}_r(z),$$
$$\mathbb{B}^+_r(z)=\mathbb{B}_r(z)\cap \{x_1> 0\},\quad  \mathbb{Q}^+_r(t, z)=(t-r^2, t)\times \mathbb{B}^+_r(z).$$
For any $r>0$, $Z=(t,z)\in \overline{\bR^{d+2}_+}$, and $\hat{a}(t)$, we have
\begin{equation*}
\dashint_{\mathbb{B}^+_r(z)}|a_{ij}(t,x)-\hat{a}(t)|\,\mu_d(dx)\,dy\le C\dashint_{B_r^+(x)}|a_{ij}(t,x)-\hat{a}(t)|\,\mu_d(dx),
\end{equation*}
where $C=C(d)$.
In particular, by the definition of the osc$_x$ and setting
$$\hat{a}(t)=\dashint_{B^+_r(x)}a_{ij}(t,x)\,\mu_d(dx),$$
we have $\text{osc}_z(a_{ij}, \mathbb{Q}^+_r(t,z))\le C\,\text{osc}_x(a_{ij}, Q^+_r(t,x))$.
By a simple calculation,
\begin{equation*}
\hat{L}\hat{u}=f\cos(\sqrt{\lambda}y)\zeta(y)+u\zeta'' \cos(\sqrt{\lambda}y)-2\sqrt{\lambda}u\zeta' \sin(\sqrt{\lambda}y):=\hat{f}.
\end{equation*}
By \eqref{eq 4.162} with $\hat{u}$ instead of $u$ in dimension $d+2$,
\begin{equation}
\vertiii{D^2\hat{u}}_{p,\theta}\le C_0\vertiii{\hat{f}}_{p,\theta} + C_1\vertiii{\hat{u}}_{p,\theta},\label{eq 4.163}
\end{equation}
where $\vertiii{\cdot}_{p,\theta}$ is the weighted $L_p$ norm in $\bR^{d+2}_+$ with respect to $\mu(dx\,dt)\,dy$.  Note that for $\lambda>1$,
\begin{equation}
c_1\le \int_{\bR}|\zeta(y) \cos(\sqrt{\lambda}y)|^p\,dy:=A\le c_2,\label{eq 4.224}
\end{equation}
where $c_1$ and $c_2$ are positive constants independent of $\lambda$. Moreover, $A$ is bounded from above with $\zeta$ replaced by any derivatives of $\zeta$ and $\cos(\sqrt{\lambda}y)$ replaced by $\sin(\sqrt{\lambda}y)$.  Then from \eqref{eq 4.163}, we have
\begin{equation}
\vertiii{D_{yy}\hat{u}}_{p,\theta}\le \vertiii{D^2\hat{u}}_{p,\theta}\le C_0\|f\|_{p,\theta}+C_1(1+\sqrt\lambda)\|u\|_{p,\theta}.
                    \label{eq 4.164}
\end{equation}
Since
\begin{align}\nonumber
D_{yy}\hat{u}&=u(t,x)\zeta''(y)\cos(\sqrt{\lambda}y)-2\sqrt{\lambda}u(t,x)\zeta'(y)
\sin(\sqrt{\lambda}y)\\
&\quad -\lambda u(t,x)\zeta(y)\cos(\sqrt{\lambda} y),\label{eq 5.225}
\end{align}
\begin{comment}Moreover,
\begin{align}\nonumber
&\|u\|^p_{p,\theta}=(\int_{\bR}|\zeta(y)\cos(\sqrt{\lambda}y)|^p)^{-1}\int_{\bR^{d+2}_+}|u(t,x)\zeta(y)\cos(\sqrt{\lambda}y)|^p\mu(dx\,dt)\,dy\\\label{eq 4.221}
&\le C_0\lambda^{-p}\int_{\bR^{d+2}_+}|u((\zeta(y)\cos(\sqrt{\lambda}y))^{''}- \zeta^{''}(y)\cos(\sqrt{\lambda}y)+2\sqrt{\lambda}\zeta^{'}(y)\sin(\sqrt{\lambda}y))|^p,
\end{align}
where $\zeta^\prime(y)=D_y\zeta(y)$. By integrating by parts, we bound the right-hand side of the inequality above by
\begin{align}
  C_0\lambda^{-p}\int_{\bR^{d+2}_+}|D_{yy}\hat{u}|^p\mu(dx\,dt)\,dy+C_0\lambda^{-p}(1+{\lambda}^{p/2})\int_{\bR^{d+1}_+}|u|^p\mu(dx\,dt).\label{eq 4.222}
 \end{align}
\end{comment}
Combining \eqref{eq 4.224}, \eqref{eq 4.164}, and \eqref{eq 5.225}, we have \begin{equation}
                                    \label{eq1.42}
\lambda\|u\|_{p,\theta}\le C_0\|f\|_{p,\theta}+C_1(1+\sqrt{\lambda})\|u\|_{p,\theta}.
\end{equation}
After choosing $\lambda$ sufficiently large depending on $d,\delta, p, \theta, R_0$ to absorb the term of $u$ on the right-hand side of \eqref{eq1.42} to the left-hand side, we have
 \begin{equation*}%\label{eq 4.165}
\lambda\|u\|_{p,\theta}\le C_0\|f\|_{p,\theta},
\end{equation*}
where $C_0=C_0(d,\delta,p,\theta)$.
By the interpolation inequality
$$\sqrt{\lambda}\|D u\|_{p,\theta}\le C_0\|D^2 u\|_{p,\theta}+C_0\lambda\|u\|_{p,\theta},$$
we finish the proof of Step 3.

Step 4: We remove the assumption that $b_i=c=0$ by moving the terms of $b_i$ and $c$ to the right-hand side
\begin{equation*}
-u_t+a_{ij}D_{ij}u-\lambda u=f-b_iD_i u-cu.
\end{equation*}
By the conclusion in Step 3 with $b_i=c=0$, there exists $\lambda_0=\lambda_0(d,\delta,p,\theta,R_0)$ such that for any $\lambda\ge \lambda_0$,
\begin{align*}
&\|u_t\|_{p,\theta}+\lambda\|u\|_{p,\theta}+\sqrt{\lambda}\|Du\|_{p,\theta}+\|M^{-1}D_1u\|_{p,\theta}+\|D^2 u\|_{p,\theta}\\
&\le C_0\|f-b_iD_i u-cu\|_{p,\theta}\\
&\le C_0\|f\|_{p,\theta}+C_0K\|D u\|_{p,\theta}+C_0K\|u\|_{p,\theta}.
\end{align*}
By taking $\lambda$ sufficiently large depending on $d$, $\delta$, $p$, $\theta$,  $R_0$, and $K$, we get
\begin{equation*}
\|u_t\|_{p,\theta}+\lambda\|u\|_{p,\theta}+\sqrt{\lambda}\|D u\|_{p,\theta}+\|M^{-1}u\|_{p,\theta}+\|D^2 u\|_{p,\theta}\le C_0\|f\|_{p,\theta}.
\end{equation*}

Step 5: To remove the assumption that $T=\infty$,  we simply follow the standard step in \cite[Theorem 2.1]{DK14} or \cite[Theorem 6.4.1]{kry07}. Therefore, the estimate \eqref{main estimate} is proved.

Finally, in the case when $b_i=c=0$ and $a_{ij}=a_{ij}(t)$, by using a scaling argument we can take $R_0=0$. The theorem is proved.
\end{proof}

\bibliographystyle{plain}
\bibliography{elliptic_pde}

\def\cprime{$'$}\def\cprime{$'$} \def\cprime{$'$} \def\cprime{$'$}
  \def\cprime{$'$} \def\cprime{$'$}

\end{document}